\documentclass[submission,copyright,creativecommons]{eptcs}
\usepackage{breakurl}             
\usepackage{underscore}           
\usepackage{amsmath}
\usepackage{amssymb}
\usepackage{graphicx}

\newtheorem{theo}{Theorem}

\newtheorem{defin}[theo]{Definition}
\newtheorem{lemma}[theo]{Lemma}
\newtheorem{propo}[theo]{Proposition}
\newtheorem{coro}[theo]{Corollary}

\newenvironment{proof}{\emph{Proof:}$\\$}{$\\\Box\\$}

\newcommand\M{\mathfrak M}
\newcommand\Var{\texttt{Var}}
\newcommand\dom{\texttt{Dom}}
\newcommand\tuple{\mathbf}

\newcommand {\parts}{\mathcal P}

\newcommand \nonempty{\texttt{NE}}
\newcommand{\ESO}{\text{ESO}}
\newcommand{\FO}{\text{FO}}

\newcommand{\ar}{\texttt{ar}}
\newcommand{\fv}{\texttt{FV}}
\newcommand{\var}{\texttt{var}}
\newcommand{\nc}{\texttt{NC}}
\newcommand{\All}{\texttt{All}}

\title{Safe Dependency Atoms and Possibility Operators in Team Semantics}
\author{Pietro Galliani
\institute{Free University of Bozen-Bolzano\\ Bolzano, Italy}
\email{Pietro.Galliani@unibz.it}
}

\begin{document}
\maketitle

\begin{abstract}
	I consider the question of which dependencies are \emph{safe} for a Team Semantics-based logic $FO(\mathcal D)$, in the sense that they do not increase its expressive power over sentences when added to it. I show that some dependencies, like totality, non-constancy and non-emptiness, are safe for all logics $FO(\mathcal D)$, and that other dependencies, like constancy, are not safe for $FO(\mathcal D)$ for some choices of $\mathcal D$ despite being strongly first order (that is, safe for $FO(\emptyset)$). I furthermore show that the \emph{possibility operator} $\diamond \phi$, which holds in a team if and only if $\phi$ holds in some nonempty subteam, can be added to any logic $\FO(\mathcal D)$ without increasing its expressive power over sentences. 
\end{abstract}

\section{Introduction}
Team Semantics \cite{hodges97} generalizes Tarskian Semantics for First Order Logic by allowing formulas to be satisfied or not satisfied with respect to sets of assignments (called \emph{teams}), rather than with respect to single assignments. First Order Logic with Team Semantics is easily shown to be equivalent to First Order Logic with Tarskian Semantics, in the sense that a first order formula is satisfied by a set of assignments in Team Semantics if and only if it is satisfied by all assignments in the set with respect to Tarskian Semantics. 

The richer nature of the satisfaction relation of Team Semantics, however, makes it possible to extend First Order Logic in novel ways, such as by introducing new operators or quantifiers \cite{barbero2017some,engstrom12,galliani2013epistemic,vaananen07b} or new types of atomic formulas which specify dependencies between different assignments contained in a team. Examples of important logics obtained in the latter way are \emph{Dependence Logic} \cite{vaananen07}, \emph{Inclusion Logic} \cite{galliani12}, and \emph{Independence Logic} \cite{gradel13}. Despite the semantics of the atoms which these logics add to the language of First Order Logic being first order (when understood as conditions over the relations corresponding to teams), these logics are strictly more expressive than First Order Logic. This, in brief, is due to the second order existential quantifications implicit in the Team Semantics rules for disjunction and existential quantification. Thus, exploring the properties of fragments of such logics (as done for instance in \cite{durand2012hierarchies,durand2016expressivity,galliani13b,hannula2015hierarchies,ronnholm2015capturing}) provides an interesting avenue to the study of the properties and relations between fragments of Second Order Logic. 

This work is a contribution towards the more systematic study of the properties of first order definable dependency atoms and of the logics they generate. Building on the work of \cite{galliani13e,galliani2016strongly}, which dealt with the case of dependencies which are \emph{strongly first order} in that they do not increase the expressive power of First Order Logic if added to it, we will find some preliminary answers to the following

\paragraph{Question:} Let $\mathcal D = \{\mathbf D_1, \mathbf D_2, \ldots\}$ be a set of first order definable dependencies. Can we characterize the sets of dependencies $\mathcal E = \{\mathbf E_1, \mathbf E_2, \ldots\}$ which are \textbf{safe} for $\mathcal D$, in the sense that every sentence of $FO(\mathcal D, \mathcal E)$ is equivalent to some sentence of $FO(\mathcal D)$?\\

To the author's knowledge, this notion of safety  -- which is the natural generalization of the notion of strongly first order dependency of \cite{galliani13e,galliani2016strongly} -- has not been considered so far in the literature; and, as we will see, known results  and currently open problems regarding the expressive power of logics with Team Semantics can be reframed in terms of it, and information concerning the safety of dependencies (or operators, if we generalize the notion of dependency to operators in the obvious way) can be highly useful to prove relationships between logics with Team Semantics. However, as we will also see, safety is a delicate notion: in particular, dependencies which are strongly first order (that is, safe for the empty set of dependencies) are not necessarily safe for all sets of dependencies.

These results will show that this notion of safety is a subtle one, deserving of further investigation. Additionally, by means of these answers we will see that the possibility operator $\diamond \phi$, which holds in a team if $\phi$ holds in some nonempty subteam of it, can be added to any logic $\FO(\mathcal D)$ without increasing its expressive power. 
\section{Preliminaries}
\subsection{Team Semantics}
In this section we will briefly recall the notation used in this work, the definition of Team Semantics, and some basic results that will be used in the rest of this work. Through all of this work, we will always assume that all our (first order) models $\M$ have at least two elements in their domain $M$ and that we have countable sets of variable symbols $\{x_i, y_i, z_i, w_i, \ldots : i \in \mathbb N\}$ and of relation symbols $R, S, \ldots$ of all arities. We will write $\tuple x$, $\tuple y$, $\tuple v$ and so on to describe tuples of variable symbols; and likewise, we will write $\tuple m$, $\tuple a$, $\tuple b$ and so forth to describe tuples of elements of a model. For any tuple $\tuple a$ of elements, $|\tuple a|$ will represent the \emph{length} of $\tuple a$; and likewise, $|\tuple v|$ represents the length of the tuple of variables $\tuple v$. Given any set $A$, we will furthermore write $\parts(A)$ for the powerset $\{B : B \subseteq A\}$ of $A$.

Variable assignments and substitutions are defined in the usual way: 
\begin{defin}[Variable Assignments, Substitution, Restriction, Composition with Functions]
	Let $\M$ be a first order model with domain $M$ and let $V$ be a set of variables. Then an \emph{assignment} over $\M$ with domain $\dom(s) = V$ is a function $s : V \rightarrow M$. We will write $\epsilon$ for the unique assignment with domain $\emptyset$. For any variable $v$ (which may or may not be in $V$ already) and any element $m \in M$, we write $s[m/v]$ for the variable assignment with domain $V \cup \{v\}$ such that  
	\[
		s[m/v](x) = \left\{\begin{array}{l l}
			m & \text{ if } x = v; \\
			s(x) & \text{ otherwise}
		\end{array}
			\right.
	\]
	for all variable symbols $x \in V \cup \{v\}$.

	For every assignment $s$, every tuple $\tuple m = m_1 \ldots m_n$ of elements and every tuple $\tuple v = v_1 \ldots v_n$ of variables with $|\tuple v| = |\tuple m|$, we will write $s[\tuple m/\tuple v]$ as an abbreviation for $s[m_1/v_1][m_2 /v_2] \ldots [m_n/v_n]$.

	For all sets of variables $V \subseteq \dom(s)$, we furthermore write $s_{|V}$ for the \emph{restriction} of the assignment $s$ to the variables of $V$, that is, for the unique assignment $s'$ with domain $V$ such that $s'(v) = s(v)$ for all $v \in V$.

	For any function $\mathfrak f: M \rightarrow M$ and any assignment $s$ over $\M$, we will write $\mathfrak f(s)$ for the unique assignment with the same domain of $s$ such that $\mathfrak f(s)(v) = \mathfrak f(s(v))$ for all $v \in \dom(s)$. 
\end{defin}

Given an expression $\phi$, we will write $\fv(\phi)$ for the set of all variables occurring free (that is, not in the scope of a quantifier for them) in $\phi$; and given a tuple $\tuple t$ of terms of our language, we will write $\var(\tuple t)$ for the set of all variables occurring in $\tuple t$.

Let us now recall the definition of Team Semantics for First Order Logic: 
\begin{defin}[Team]
	Let $V$ be a finite set of variables, and let $\M$ be a first order model with domain $M$. Then a team $X$ over $\M$ with domain $\dom(X) = V$ is a set of assignments $s: V \rightarrow M$. 
\end{defin}
\begin{defin}[From Teams to Relations]
	Let $X$ be a team over a first order model $\M$, and let $\tuple v = v_1 \ldots v_k$ be a tuple (possibly with repetitions) of variables $v_i \in \dom(X)$. Then we write $X(\tuple v)$ for the $k$-ary relation given by
	\[
		X(\tuple v) = \{s(v_1v_2\ldots v_n)  : s \in X\}
	\]
	where $s(v_1 v_2 \ldots v_n)$ is a shorthand for the $n$-tuple $(s(v_1), s(v_2), \ldots, s(v_n))$. 
\end{defin}
\begin{defin}[Team Supplementation]
	Let $X$ be a team over some first order model $\M$, let $k \in \mathbb N$, and let $\tuple v \in \Var^k$ be a tuple of $k$ distinct variables (which may or may not occur already in $\dom(X)$).Then, for all functions $H: X \rightarrow \parts(\dom(X)^k) \backslash \{\emptyset\}$, we define $X[H/\tuple v]$ as the team with domain $\dom(X) \cup \tuple v$ given by 
	\[
		X[H/\tuple v] = \{s[\tuple m/\tuple v]: s \in X, \tuple m \in H(s)\}. 
	\]
\end{defin}
In other words, a supplementation function $H$ for the team $X$ selects, for each assignment $s \in X$, a nonempty set $H(X)$ of possible values for the variables $\tuple v$, and $X[H/\tuple v]$ is obtained from $X$ by assigning these possible values to the variables $\tuple v$.

The \emph{duplication} operator, which will be now described, corresponds then to the special case of supplementation for which $H(s) = M^k$ for all $s \in X$: 
\begin{defin}[Team Duplication]
	Let $X$ be a team over some first order model $\mathfrak M$, let $k \in \mathbb N$, and let $\tuple v \in \Var^k$ be once more a tuple of $k$ distinct variables. Then the duplication $X[M/\tuple v]$ of $X$ along $\tuple v$ is the team 
	\[	
		X[M/\tuple v] = \{s[\tuple m/\tuple v] : s \in X, \tuple m \in M^k\}.
	\]
\end{defin}

Team Semantics was originally developed by Hodges in \cite{hodges97} in order to provide a compositional semantics equivalent to the imperfect-information, game-theoretic semantics of \emph{Independence-Friendly Logic} \cite{hintikka96,hintikkasandu89,hintikkasandu97}; but for our purposes it will be useful to first present it for First Order Logic proper. For simplicity, we will assume that all expressions are in \emph{Negation Normal Form:} 
\begin{defin}[Team Semantics for First Order Logic]
	Let $\M$ be a first order model with domain $M$, let $\phi(\tuple x)$ be a first order formula in negation normal form with free variables contained in $\tuple x$, and let $X$ be a team over $\M$ with domain $\dom(X) \supseteq \tuple x$.\footnote{We use this slight abuse of notation to mean that every variable $x_i$ occurring in the tuple $\tuple x = x_1 \ldots x_n$ belongs to $\dom(X)$.} Then we say that the team $X$ satisfies $\phi(\tuple x)$ in $\M$, and we write $\M \models_X \phi$, if this can be derived via the following rules: 
	\begin{description}
		\item[TS-lit:] For all first order literals $\alpha$, $\M \models_X \alpha$ if and only if, for all assignments $s \in X$, $\M \models_s \alpha$ according to the usual Tarskian Semantics; 
		\item[TS-$\vee$:] For all formulas $\psi_1$ and $\psi_2$, $\M \models_X \psi_1 \vee \psi_2$ if and only if there exist teams $Y$ and $Z$ such that $X = Y \cup Z$, $\M \models_Y \psi_1$ and $\M \models_Z \psi_2$; 
		\item[TS-$\wedge$:] For all formulas $\psi_1$ and $\psi_2$, $\M \models_X \psi_1 \wedge \psi_2$ if and only if $\M \models_X \psi_1$ and $\M \models_X \psi_2$; 
		\item[TS-$\exists$:] For all variables $v$ and formulas $\psi$, $\M \models_X \exists v \psi$ if and only if there exists some $H: X \rightarrow \parts(M) \backslash \{\emptyset\}$ such that $\M \models_{X[H/v]} \psi$; 
		\item[TS-$\forall$:] For all variables $v$ and formulas $\psi$, $\M \models_X \forall v \psi$ if and only if $\M \models_{X[M/v]} \psi$.
	\end{description}

	If $\phi$ is a sentence (i.e. has no free variables), we say that $\phi$ is \emph{true} in $\M$ according to Team Semantics, and we write $\M \models \phi$, if and only if $\M \models_{\{\epsilon\}} \phi$, where $\{\epsilon\}$ is the team containing only the empty assignment. 
\end{defin}
It is worth remarking that the above semantics for the language of first order logic involves \emph{second order} existential quantifications in the rules \textbf{TS-$\vee$} and \textbf{TS-$\exists$}. This is a crucial fact for understanding the expressive power of logics based on Team Semantics, and it is furthermore the reason why Team Semantics constitutes a viable tool for describing and studying fragments of existential second order logic. Nonetheless, as the following well known result shows, there exists a very strict relationship between the satisfaction conditions of first order formulas in Team Semantics and in the usual Tarskian Semantics:
\begin{propo}
	Let $\M$ be a first order model, let $\phi$ be a first order formula over the signature of $\M$, and let $X$ be a team over $\M$ with domain containing all the free variables of $\phi$. Then $\M \models_X \phi$ if and only if for all $s \in X$, $\M \models_s \phi$ according to the usual Tarskian Semantics. 
	\label{propo:fo}
\end{propo}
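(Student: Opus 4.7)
My plan is to prove the proposition by structural induction on the first order formula $\phi$, exploiting the fact that since $\phi$ is in negation normal form I only need to consider literals, conjunction, disjunction, and the two quantifiers. Before starting I would observe (as an auxiliary lemma or remark) that the empty team satisfies every first order formula: this follows by a quick induction on $\phi$, using the fact that all universal quantifiers in the team semantic rules are vacuously true over the empty team and that the witnesses in \textbf{TS-$\vee$} and \textbf{TS-$\exists$} can be chosen trivially. This observation is what makes the disjunction case go through.

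The base case (literals) is immediate from rule \textbf{TS-lit}, which is essentially a restatement of the claim. The conjunction case follows directly from \textbf{TS-$\wedge$} and the inductive hypothesis applied to both conjuncts, since neither form of satisfaction moves outside the team $X$. The universal case is equally direct: by \textbf{TS-$\forall$} and the inductive hypothesis, $\M \models_X \forall v \psi$ iff every assignment in $X[M/v]$ satisfies $\psi$ in Tarskian semantics, which is precisely the Tarskian condition for $\forall v \psi$ applied to each $s \in X$.

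The interesting cases are disjunction and existential quantification, where for the right-to-left direction I would construct the required witnesses explicitly from Tarskian witnesses. For $\psi_1 \vee \psi_2$, given that each $s \in X$ satisfies $\psi_1$ or $\psi_2$ in Tarskian sense, I set $Y = \{s \in X : \M \models_s \psi_1\}$ and $Z = X \setminus Y$; then $X = Y \cup Z$, and the inductive hypothesis together with the empty-team remark (in case $Y$ or $Z$ is empty) give $\M \models_Y \psi_1$ and $\M \models_Z \psi_2$. For $\exists v \psi$, for each $s \in X$ I pick a Tarskian witness $m_s \in M$ such that $\M \models_{s[m_s/v]} \psi$ and define $H(s) = \{m_s\}$; then $X[H/v]$ is a team every assignment of which Tarski-satisfies $\psi$, so the inductive hypothesis yields $\M \models_{X[H/v]} \psi$. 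The left-to-right directions of these cases are similarly routine: a decomposition $X = Y \cup Z$ or a function $H$ certifying team satisfaction is used pointwise to produce Tarskian witnesses for each $s \in X$.

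The main conceptual point rather than technical obstacle is the disjunction case, since it is precisely the place where the flatness of first order logic is used: an arbitrary team satisfying $\psi_1 \vee \psi_2$ can be split cleanly into a $\psi_1$-part and a $\psi_2$-part because $\psi_1$ and $\psi_2$ themselves are downward closed on subteams (which is implicit in the inductive step, and would fail once dependency atoms such as dependence atoms are introduced). I would therefore emphasise that the argument relies on nothing beyond the inductive hypothesis and the empty-team observation, making clear at which step each ingredient is used.
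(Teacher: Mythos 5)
Your proof is correct and is exactly the standard flatness argument that the paper leaves implicit (Proposition \ref{propo:fo} is stated there as a well-known result without proof): induction on negation normal form, with the explicit splitting $Y = \{s \in X : \M \models_s \psi_1\}$, $Z = X \setminus Y$ for disjunction and singleton supplementation functions $H(s) = \{m_s\}$ for the existential quantifier. The only remark worth adding is that your auxiliary empty-team observation is not needed as a separate lemma, since the inductive hypothesis is a biconditional and the Tarskian side is vacuously true over the empty team, so $\M \models_{\emptyset} \psi$ already follows from the induction itself.
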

It is a straightforward consequence of Proposition \ref{propo:fo} that truth in Tarskian Semantics and in Team Semantics coincide: 
\begin{coro}
	Let $\M$ be a first order model and let $\phi$ be a first order sentence. Then $\M \models \phi$ according to Team Semantics if and only if $\M \models \phi$ according to the usual Tarskian Semantics. 
\end{coro}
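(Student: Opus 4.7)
The plan is to derive this corollary directly from Proposition \ref{propo:fo} by unfolding the definition of Team Semantics truth for sentences. There is no genuine obstacle here; the content of the statement is essentially a degenerate case of the previous proposition, applied to the singleton team $\{\epsilon\}$ containing only the empty assignment.

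More concretely, I would proceed as follows. By definition, $\M \models \phi$ according to Team Semantics means $\M \models_{\{\epsilon\}} \phi$. Since $\phi$ is a sentence, its set of free variables is empty and therefore trivially contained in $\dom(\{\epsilon\}) = \emptyset$, so Proposition \ref{propo:fo} applies and gives that $\M \models_{\{\epsilon\}} \phi$ holds if and only if $\M \models_s \phi$ (in Tarskian Semantics) for every $s \in \{\epsilon\}$. The only such $s$ is the empty assignment $\epsilon$, so this is equivalent to $\M \models_\epsilon \phi$. Finally, since $\phi$ has no free variables, the Tarskian satisfaction $\M \models_\epsilon \phi$ is (by the standard convention) the same as $\M \models \phi$ in the classical Tarskian sense.

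Chaining these equivalences yields the corollary. The only subtle point worth emphasizing in the write-up is that the team $\{\epsilon\}$ is not the empty team but the team whose unique member is the empty assignment, so the universal quantifier ``for all $s \in \{\epsilon\}$'' is nontrivial and collapses to a single Tarskian satisfaction check.
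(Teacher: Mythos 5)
Your proof is correct and follows exactly the route the paper intends: the corollary is stated as an immediate consequence of Proposition \ref{propo:fo} applied to the singleton team $\{\epsilon\}$, and your unfolding of the definitions (including the remark that $\{\epsilon\}$ is nonempty, so the universal quantification collapses to one Tarskian check) is precisely the omitted argument.
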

\subsection{The $[R : \tuple t]$ operator, dependencies, and a normal form}
As we saw in the previous section, there is a very strict connection between Tarskian Semantics and Team Semantics for First Order Logic: not only these two semantics agree with respect to the truth of sentences, but the satisfaction conditions of a first order formula $\phi$ with respect to Team Semantics can be obtained in a very straightforward way from the satisfaction conditions of the same formula with respect to Tarskian Semantics. 

There is, however, an important asymmetry in First Order Logic between Tarskian Semantics and Team Semantics. Every first order definable property of tuples of elements corresponds trivially to the satisfaction condition (in Tarskian Semantics) of some first order formula. However, not all first order definable properties of \emph{teams} (interpreted as relations) correspond to the satisfaction conditions (in Team Semantics) of first order formulas, as the following easy consequence of Proposition \ref{propo:fo} shows:
\begin{coro}
	There is no first order formula $\phi(v)$, with $v$ as its only free variable, such that for all first order models $\M$ and teams $X$ with $v \in \dom(X)$ it holds that $\M \models_X \phi(v)$ if and only if $|X(v)| = |\{s(v): s \in X\}| \geq 2$ (that is, if and only if the variable $v$ takes at least two distinct values in $X$). 
	\label{coro:nc}
\end{coro}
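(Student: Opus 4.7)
The plan is a short flatness argument leveraging Proposition~\ref{propo:fo}. Suppose for contradiction that such a first order formula $\phi(v)$ exists. I would use the fact that the paper assumes every model has at least two domain elements, so I can pick distinct $a_1, a_2 \in M$ for any fixed $\M$, and form the singleton assignments $s_i : \{v\} \to M$ with $s_i(v) = a_i$.

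Next I would look at the three teams $X_1 = \{s_1\}$, $X_2 = \{s_2\}$ and $X = \{s_1, s_2\}$. By the assumed property of $\phi$, we have $\M \not\models_{X_i} \phi(v)$ for $i=1,2$ (since $|X_i(v)| = 1$), but $\M \models_X \phi(v)$ (since $|X(v)| = 2$). Applying Proposition~\ref{propo:fo} to the singleton teams gives $\M \not\models_{s_1} \phi(v)$ and $\M \not\models_{s_2} \phi(v)$ in Tarskian Semantics, while applying it to $X$ gives $\M \models_{s_1} \phi(v)$ and $\M \models_{s_2} \phi(v)$, a direct contradiction.

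There is really no obstacle here: the whole content of the argument is that first order formulas are \emph{flat} (their Team Semantics truth reduces pointwise to Tarskian truth), whereas the putative property ``$v$ takes at least two values'' is inherently non-flat, as witnessed by the gap between satisfaction on singletons and on the union. Once Proposition~\ref{propo:fo} is invoked the contradiction is immediate, which is presumably why the statement is labelled a corollary.
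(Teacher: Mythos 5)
Your proof is correct and follows exactly the route the paper intends: Corollary~\ref{coro:nc} is stated as an ``easy consequence of Proposition~\ref{propo:fo},'' and your flatness argument --- comparing satisfaction on the two-element team $X=\{s_1,s_2\}$ with satisfaction on its singleton subteams via the pointwise Tarskian characterization --- is precisely that consequence. Nothing is missing.
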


Thus, the property of unary relations describable as ``containing at least two elements'', which is easily seen to be first order definable via the sentence $\Phi(U) = \exists p q (U p \wedge U q \wedge p \not = q)$, does not correspond to the satisfaction conditions (according to Team Semantics) of any first order formula.

A straightforward way to ensure that all first order definable properties of relations correspond to the satisfaction conditions of formulas would be to add the following rule to our semantics: 
\begin{description}
	\item[TS-{[:]}:] For all signatures $\Sigma$, all models $\M$ having signature $\Sigma$, all $k \in \mathbb N$, all $k$-ary relation symbols $R$ (which may or may not occur already in $\Sigma$), all tuples $\tuple t = t_1 \ldots t_k$ of terms, and all first order formulas $\phi$ in the signature $\Sigma \cup \{R\}$,
	\[
		\M \models_X [R : \tuple t]\phi \text{ if and only if } \M[X(\tuple t)/R] \models_X \phi 
	\]
	where $\M[X(\tuple t)/R]$ is the expansion of $\M$ to the signature $\Sigma \cup \{R\}$ such that its interpretation $R^{\M[X(\tuple t)/R]}$  of $R$ is simply $X(\tuple t)$. 
\end{description}
Much of the study of Team Semantics so far has focused on the classification of logics obtained by adding expressions of the form $[R:t]\phi$ to First Order Logic, $\phi$ belongs to some class of first order sentences over the signature $\{R\}$.\footnote{Exceptions to this are given for instance by the study of logics which add to Team Semantics \emph{generalised quantifiers} \cite{barbero2017some,engstrom12}, or a \emph{contradictory negation} \cite{vaananen07b}.} 

\begin{defin}[(First Order) Dependencies]
	Let $k \in \mathbb N$. A $k$-ary first order dependency $\mathbf D$ is a first order sentence $\mathbf D(R)$ over the signature $\{R\}$, where $R$ is a $k$-ary relation symbol.\footnote{This is a special case of the more general -- and not necessarily first order -- notion of dependency used in \cite{galliani13e}, which comes from \cite{kuusisto13}.}
\end{defin}
\begin{defin}[$\FO(\mathcal D)$]
	Let $\mathcal D  = \{\mathbf D_1 \ldots \mathbf D_n\}$ be a family of first order dependencies. Then $\FO(\mathcal D)$ is obtained by adding to First Order Logic (with Team Semantics) all \emph{dependency atoms} of the form\\$[R: \tuple t] \mathbf D_i(R)$ for all $i = 1 \ldots n$, where $\tuple t$ is a tuple of terms the same arity $\ar(\mathbf D_i)$ of $\mathbf D_i$, $R$ is a relational symbol of the same arity, and we write $\mathbf D_i \tuple t$ as a shorthand for $[R : \tuple t] \mathbf D_i(R)$. 
\end{defin}

We conclude this section with some simple results that are easily shown to hold for the full $\FO([:])$ and for all its fragments (including all $\FO(\mathcal D)$), and with a \emph{normal form} for all sentences in $\FO(\mathcal D)$ for any set $\mathcal D$ of dependencies:

\begin{defin}[Properties of Formulas and Dependencies]
	Let $\phi(\tuple v)$ be any formula of $\FO[:]$. Then we say that $\phi$ 
	\begin{itemize}
		\item is \textbf{Downwards Closed} if $\M \models_X \phi, Y \subseteq X \Rightarrow \M \models_Y \phi$ for all suitable models $\M$ and teams $X, Y$; 
		\item is \textbf{Upwards Closed} if $\M \models_X \phi, Y \supseteq X \Rightarrow \M \models_Y \phi$ for all suitable models $\M$ and teams $X, Y$; 
		\item is \textbf{Union Closed} if $\M \models_{X_i} \phi \forall i \in I \Rightarrow \M \models_{\cup_i X_i} \phi$ for all suitable models $\M$ and families of teams (all with the same domain) $(X_i)_{i \in I}$; 
		\item has \textbf{the Empty Team Property} if $\M \models_{\emptyset} \phi$.
	\end{itemize}
	We say that a dependency $\mathbf D$ (that is, a first order sentence $D(R)$ over the signature $\{R\}$) has any such property if all the formulas $\textbf D \tuple t$ (that is, $[R:\tuple t]\mathbf D(R)$) have it.
	\label{def:props}
\end{defin}
Three of these four properties are preserved by the connectives of our language, as it can be proved by straightforward induction: 
\begin{propo}
	Let $\mathcal D = \{\mathbf D_1, \mathbf D_2, \ldots\}$ be a family of dependencies which are all Downwards Closed [are all Union Closed, have all the Empty Team Property]. Then every formula of $\FO(\mathcal D)$ is Downwards Closed [is Union Closed, has the Empty Team Property].
\label{propo:pres}
\end{propo}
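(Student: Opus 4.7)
The plan is a routine structural induction on the construction of formulas of $\FO(\mathcal D)$, handled separately (but in parallel) for each of the three properties. For the base cases, literals clearly satisfy all three properties because by \textbf{TS-lit} their satisfaction reduces to Tarskian satisfaction at each assignment individually; and the dependency atoms $\mathbf D \tuple t$ for $\mathbf D \in \mathcal D$ satisfy the property in question by the hypothesis of the proposition.

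For the inductive step on $\psi_1 \wedge \psi_2$, all three properties are immediate from \textbf{TS-$\wedge$}, since satisfaction of the conjunction simply means both conjuncts are satisfied by the same team. For $\psi_1 \vee \psi_2$, given a witnessing split $X = Y \cup Z$: for downward closure on $X' \subseteq X$ I would use the split $X' = (Y \cap X') \cup (Z \cap X')$; for union closure, given splits $X_i = Y_i \cup Z_i$ for each $i \in I$, I would use the split $\bigcup_i X_i = (\bigcup_i Y_i) \cup (\bigcup_i Z_i)$ and invoke union closure of $\psi_1, \psi_2$; and for the empty team property, the split $\emptyset = \emptyset \cup \emptyset$ together with the inductive hypothesis suffices.

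For $\forall v \psi$, downward closure follows from $X' \subseteq X \Rightarrow X'[M/v] \subseteq X[M/v]$, union closure follows from $(\bigcup_i X_i)[M/v] = \bigcup_i X_i[M/v]$, and the empty team property from $\emptyset[M/v] = \emptyset$. For $\exists v \psi$, downward closure follows by restricting a witnessing $H$ to the smaller team; the empty team case is trivial (the unique function from the empty domain witnesses $\emptyset[H/v] = \emptyset$, and apply the inductive hypothesis). The one step requiring a small amount of care is union closure of $\exists v \psi$: given witnesses $H_i : X_i \to \parts(M) \setminus \{\emptyset\}$ with $\M \models_{X_i[H_i/v]} \psi$, I would define $H : \bigcup_i X_i \to \parts(M) \setminus \{\emptyset\}$ by $H(s) = \bigcup \{ H_i(s) : i \in I, s \in X_i\}$, verify that $(\bigcup_i X_i)[H/v] = \bigcup_i X_i[H_i/v]$, and then apply union closure of $\psi$.

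There is no real obstacle; the only mildly interesting point is the combination of supplementation functions for union closure under $\exists$, where one must check that the resulting team is indeed the union of the supplemented teams. Everything else is a direct unfolding of the semantic clauses.
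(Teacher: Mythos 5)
Your proof is correct and is precisely the ``straightforward induction'' that the paper invokes without writing out: the paper gives no further detail, and your treatment of each connective (including the combination of supplementation functions for union closure under $\exists$) is the standard argument. Nothing to add.
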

The property of union closure, on the other hand, is clearly not preserved in the same way as it is violated already by first order literals. However, this property is nonetheless useful for the classification of the expressive power of logics with Team Semantics. 

\begin{defin}[Team Restriction] 
	Let $X$ be a team over a model $\M$, and let $V \subseteq \dom(X)$. Then $X_{|V}$ is the \emph{restriction} of $X$ to the domain $V$, that is, the team 
		$X_{|V} = \{s_{|V} : s \in X\}$. 
\end{defin}
\begin{propo}[Locality]
	Let $\M$ be any first order model, let $\phi \in \FO([:])$ be a formula over the signature of $\M$, and let $X$ be a team over $\M$ such that the set $\fv(\phi)$ of the free variables of $\phi$ is contained in $\dom(X)$. Then $\M \models_X \phi$ if and only if $\M \models_{X_{|\fv(\phi)}} \phi$. 
	\label{propo:local}
\end{propo}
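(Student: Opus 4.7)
The plan is to proceed by structural induction on $\phi$, showing simultaneously both directions of the equivalence. Since the four properties in Definition \ref{def:props} are not needed, this is a purely syntactic argument.

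For the base cases, let me first handle literals: if $\alpha$ is a first order literal, then $\M \models_X \alpha$ iff $\M \models_s \alpha$ for every $s \in X$ by \textbf{TS-lit}, and since the Tarskian satisfaction of $\alpha$ under $s$ depends only on the values $s$ assigns to variables in $\fv(\alpha)$, this is equivalent to $\M \models_{s'} \alpha$ for every $s' \in X_{|\fv(\alpha)}$. For dependency atoms of the form $[R:\tuple t]\mathbf D(R)$, I would observe that $X(\tuple t) = X_{|\var(\tuple t)}(\tuple t)$ since the tuples $s(\tuple t)$ depend only on the values $s$ assigns to variables appearing in $\tuple t$, and $\fv([R:\tuple t]\mathbf D) = \var(\tuple t)$; thus by rule \textbf{TS-[:]} both $\M \models_X [R:\tuple t]\mathbf D(R)$ and $\M \models_{X_{|\fv}} [R:\tuple t]\mathbf D(R)$ reduce to the same condition $\M[X(\tuple t)/R] \models \mathbf D(R)$ (which in the Tarskian case is team-independent once $R$ is fixed).

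Conjunction is immediate from the induction hypothesis. For disjunction $\psi_1 \vee \psi_2$, one direction takes a witnessing split $X = Y \cup Z$ and restricts to get $X_{|\fv(\phi)} = Y_{|\fv(\phi)} \cup Z_{|\fv(\phi)}$; by applying the IH twice (first from $Y$ to $Y_{|\fv(\psi_i)}$, then from $Y_{|\fv(\phi)}$ to $Y_{|\fv(\psi_i)}$ using $\fv(\psi_i) \subseteq \fv(\phi)$) one transfers satisfaction appropriately. The converse direction pulls back a split $X_{|\fv(\phi)} = Y' \cup Z'$ by letting $Y = \{s \in X : s_{|\fv(\phi)} \in Y'\}$ and similarly for $Z$, so that $Y_{|\fv(\phi)} = Y'$ and $Z_{|\fv(\phi)} = Z'$. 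For the universal case $\forall v \psi$, I would verify by a direct calculation the equality $(X[M/v])_{|\fv(\psi)} = (X_{|\fv(\phi)}[M/v])_{|\fv(\psi)}$, splitting into the subcases $v \in \fv(\psi)$ and $v \notin \fv(\psi)$, and then invoke the IH on $\psi$ against both sides.

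The main obstacle is the existential case $\exists v \psi$, since supplementation functions on $X$ and on $X_{|\fv(\phi)}$ are not in natural bijection: several assignments in $X$ may collapse into a single assignment in $X_{|\fv(\phi)}$ after restriction. For the forward direction, from a witness $H : X \to \parts(M)\setminus\{\emptyset\}$ I define
\[
H'(s') = \bigcup\{H(s) : s \in X,\ s_{|\fv(\phi)} = s'\}
\]
for $s' \in X_{|\fv(\phi)}$; this is nonempty and one checks that $(X[H/v])_{|\fv(\psi)}$ and $(X_{|\fv(\phi)}[H'/v])_{|\fv(\psi)}$ coincide, after which the IH applied to $\psi$ yields the conclusion. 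For the converse direction, from a witness $H'$ on $X_{|\fv(\phi)}$ one simply defines $H(s) = H'(s_{|\fv(\phi)})$ and similarly matches the restricted supplementations. Writing out these equalities and the two matching subcases ($v \in \fv(\psi)$ versus $v \notin \fv(\psi)$) is where most of the bookkeeping lives, but no genuinely new idea beyond the union construction is required.
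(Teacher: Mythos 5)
Your proof is correct, and since the paper states Locality without proof (listing it among results "easily shown to hold" for $\FO([:])$), the right comparison is with the standard argument from the literature, which is exactly the structural induction you give: the only genuinely delicate step is the existential case, where restriction to $\fv(\phi)$ is not injective on assignments, and your union construction $H'(s') = \bigcup\{H(s) : s_{|\fv(\phi)} = s'\}$ together with the pullback $H(s) = H'(s_{|\fv(\phi)})$ is precisely how that is handled. The bookkeeping you defer (the $v \in \fv(\psi)$ versus $v \notin \fv(\psi)$ subcases, the double application of the induction hypothesis via $(Y_{|\fv(\phi)})_{|\fv(\psi_i)} = Y_{|\fv(\psi_i)}$, and the empty-team degenerate case for dependency atoms) all checks out.
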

The following result is the generalization to $\FO([:])$ of Proposition 19 of \cite{galliani13b}, and the proof is entirely analogous: 
\begin{propo}
	The following equivalences hold for all $\psi_1, \psi_2 \in \FO([:])$ and all variables $v$ occurring free in $\psi_1$ but not in $\psi_2$ and for all two variables $p$ and $q$, different from each other and from $v$, which occur in neither $\psi_1$ nor $\psi_2$;
	\begin{enumerate}
		\item $(\exists v \psi_1) \vee \psi_2 \equiv \exists v (\psi_1 \vee \psi_2)$; 
		\item $(\exists v \psi_1) \wedge \psi_2 \equiv \exists v (\psi_1 \wedge \psi_2)$; 
		\item $(\forall v \psi_1) \vee \psi_2 \equiv \exists p q \forall v ( (p = q \wedge \psi_1) \vee (p \not = q \wedge \psi_2))$; 
		\item $(\forall v \psi_1) \wedge \psi_2 \equiv \forall v (\psi_1 \wedge \psi_2)$
	\end{enumerate}
	\label{propo:qmove}	
\end{propo}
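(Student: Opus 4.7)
The plan is to verify each of the four equivalences by directly unwinding the Team Semantics rules for the quantifiers and connectives involved, using Proposition \ref{propo:local} (Locality) to handle the occurrences of $v$, $p$, $q$ that are not free in $\psi_1$ or $\psi_2$ on the side where they appear. Items (1), (2), and (4) proceed along the same lines as Proposition 19 of \cite{galliani13b}, so the real content lies in (3).

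For (2) and (4), the supplementation function (respectively, the duplication $X[M/v]$) witnessing $\exists v$ (respectively, $\forall v$) can be reused unchanged between the two sides, and locality yields $\M \models_{X[H/v]} \psi_2 \Leftrightarrow \M \models_X \psi_2$ since $v \notin \fv(\psi_2)$. For (1), from a left-hand-side witness $X = Y \cup Z$ with $Y$ supplemented by $H$, I would extend $H$ to $X$ by arbitrary choices on $Z \setminus Y$ and split the resulting team accordingly; conversely, I would restrict any witnessing split of $X[H'/v]$ back to $\dom(X)$ and again appeal to locality on the $\psi_2$ component.

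For the left-to-right direction of (3), I would exploit the standing assumption $|M| \geq 2$: fix distinct $a, b \in M$, set $H_p(s) = \{a\}$ for every $s \in X$, and define $H_q(s[a/p])$ to be $\{a\}$ if $s \in Y \setminus Z$, $\{b\}$ if $s \in Z \setminus Y$, and $\{a, b\}$ if $s \in Y \cap Z$. Then $X[H_p/p][H_q/q][M/v]$ splits as $Y[a/p][a/q][M/v] \cup Z[a/p][b/q][M/v]$, and satisfaction of $p = q \wedge \psi_1$ and $p \neq q \wedge \psi_2$ on these two pieces reduces, via locality (using $p, q \notin \fv(\psi_1) \cup \fv(\psi_2)$), to the hypotheses $Y[M/v] \models \psi_1$ and $Z \models \psi_2$.

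The right-to-left direction of (3) is the step I expect to be the main obstacle. Given supplementations $H_p, H_q$ and a witnessing split $Y' \cup Z' = X[H_p/p][H_q/q][M/v]$, I would define
\[
Y = \{s \in X : \exists m_p \in H_p(s),\ \exists m_q \in H_q(s[m_p/p]),\ m_p = m_q\}
\]
and $Z$ symmetrically with $m_p \neq m_q$, so that $X = Y \cup Z$. The delicate point --- critical because $\psi_1$ and $\psi_2$ need not be downward closed --- is that since the formulas $p = q$ and $p \neq q$ partition the team $X[H_p/p][H_q/q][M/v]$, the set $Y'$ must in fact coincide with the entire subteam where $p = q$ (and $Z'$ with the entire subteam where $p \neq q$). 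Consequently $Y'_{|\fv(\psi_1)}$ coincides exactly with $Y_{|\fv(\psi_1) \setminus \{v\}}[M/v]$, so locality upgrades $Y' \models \psi_1$ to $Y[M/v] \models \psi_1$; the argument for $Z$ and $\psi_2$ is analogous and simpler, since $v \notin \fv(\psi_2)$.
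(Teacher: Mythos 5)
Your proof is correct and follows the same route the paper intends: the paper gives no explicit proof, deferring to the ``entirely analogous'' Proposition 19 of the cited work, whose argument is precisely this direct unwinding of the team-semantic clauses together with Locality. Your treatment of the delicate right-to-left direction of (3) is sound --- the observation that $p=q$ and $p\neq q$ force the witnessing split to coincide with the partition of the supplemented team, so that the $\psi_1$-part projects exactly onto $Y[M/v]$, is exactly the point that makes the equivalence work without any downward-closure assumption.
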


It follows from the above equivalences that all logics $\FO(\mathcal D)$, for all choices of $\mathcal D$, admit the following Prenex Normal Form, which is analogous of the one proved in Theorem 15 of \cite{galliani13b}: 
\begin{theo}
	Let $\mathcal D$ be any family of dependencies, and let $\phi$ be a formula of $\FO(\mathcal D)$. Then $\phi$ is logically equivalent to some formula $\phi' \in \FO(\mathcal D)$ of the form $Q_1 v_1 \ldots Q_n v_n \psi$, where each $Q_i$ is $\exists$ or $\forall$ and $\psi$ is quantifier-free. Furthermore, $\psi$ contains the same number of dependency atoms that $\phi$ does, and the number of universal quantifiers among $Q_1 \ldots Q_n$ is the same as the number of universal quantifiers in $\phi$ (although there may be more existential quantifiers in $Q_1 \ldots Q_n$ than in $\psi$). 
	\label{thm:pnf}
\end{theo}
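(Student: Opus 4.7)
The plan is to prove this by induction on the structure of $\phi$, using the equivalences of Proposition \ref{propo:qmove} to migrate quantifiers past binary connectives while carefully tracking the quantifier and dependency-atom counts. Throughout, I will tacitly rename bound variables (which is legitimate since quantifiers in Team Semantics, like in Tarskian Semantics, are invariant under renaming of their bound variable to a fresh one) to ensure that whenever I invoke an equivalence from Proposition \ref{propo:qmove}, the side conditions on the variables $v$, $p$, $q$ are met.

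The base case is trivial: first order literals and dependency atoms $\mathbf D_i \tuple t$ are already quantifier-free, so I take $n = 0$ and $\psi = \phi$. For the inductive cases $\phi = \exists v \chi$ and $\phi = \forall v \chi$, I apply the induction hypothesis to $\chi$ to obtain a prenex $Q_1 v_1 \ldots Q_m v_m \chi'$ equivalent to it, and simply prepend the outer quantifier; the required count conditions are preserved immediately.

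The two interesting inductive steps are $\phi = \psi_1 \wedge \psi_2$ and $\phi = \psi_1 \vee \psi_2$. By the induction hypothesis I may assume $\psi_1 \equiv Q_1 v_1 \ldots Q_m v_m \psi_1'$ and $\psi_2 \equiv Q_1' u_1 \ldots Q_k' u_k \psi_2'$ in prenex form, and by alpha-renaming I arrange that the bound variables $v_1, \ldots, v_m, u_1, \ldots, u_k$ are pairwise distinct and do not occur free in the other formula. Then I peel the quantifiers of $\psi_1$ one at a time off the outside: in the conjunction case I use equivalences 2 and 4 of Proposition \ref{propo:qmove}, neither of which introduces new quantifiers; in the disjunction case I use equivalence 1 for existentials (no new quantifiers), and equivalence 3 for universals, which introduces two fresh existential quantifiers $\exists p q$ but no additional universals and no additional dependency atoms. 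Once all the quantifiers of $\psi_1$ have been pulled out, I repeat the procedure symmetrically for $\psi_2$. The result is a prenex formula whose matrix contains exactly the dependency atoms of $\psi_1'$ and $\psi_2'$ (hence the same count as in $\phi$) and whose universal quantifiers are exactly those inherited from $\psi_1$ and $\psi_2$, while the number of existentials may have grown due to applications of equivalence 3.

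The main obstacle is administrative rather than conceptual: one must be meticulous about variable capture when repeatedly applying Proposition \ref{propo:qmove}, because equivalence 3 forces the introduction of genuinely fresh variables $p, q$ at every application, and one must verify that these, together with the bound variables being peeled off, remain fresh relative to the remaining (still unprocessed) quantifier blocks of the other subformula. A clean way to handle this is to choose, before beginning, a sufficiently large reservoir of fresh variables to use throughout the procedure, and to alpha-rename all bound variables of $\psi_1$ and $\psi_2$ into this reservoir at the outset. Finally, the counting claims follow by a straightforward tally over the inductive steps: only equivalence 3 ever adds quantifiers, and each such addition is purely existential and does not touch the matrix, so the number of universals and the number of dependency atoms remain invariant throughout.
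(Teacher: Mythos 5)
Your proof is correct and follows exactly the route the paper intends: the paper gives no explicit proof of Theorem \ref{thm:pnf} but states that it follows from the equivalences of Proposition \ref{propo:qmove} (analogously to Theorem 15 of the cited earlier work), and your structural induction, pulling quantifiers out via those four equivalences after alpha-renaming and tallying that only equivalence 3 adds (purely existential) quantifiers while no subformula is ever duplicated, is precisely that argument. The only detail you gloss over is that after one application of equivalence 3 the remaining quantifiers of a disjunct sit inside a conjunction within the new matrix, so the peeling must interleave equivalences 2 and 4 (plus commutativity of $\wedge$ and $\vee$) with 1 and 3 --- but this is administrative and does not affect correctness or the counts.
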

Theorem \ref{thm:normalform} at the end of this section will show how this normal form may be further refined.

\begin{defin}[Team Conditioning]
	Let $X$ be a team over a model $\M$ and let $\theta(\tuple v)$ be a first order formula with free variables in $\dom(X)$. Then $X\upharpoonright \theta$ is the subteam of $X$ containing only the assignments which satisfy $\theta$ (in the Tarskian Semantics sense), that is, 
	\[
		X\upharpoonright \theta = \{s \in X : \M \models_s \theta\}
	\]
\end{defin}
\begin{defin}[$\theta \hookrightarrow \phi$]
	Let $\theta$ be a first order formula with free variables in $\tuple x$ and let $\phi$ be a $\FO([:])$ formula. Then we define $\theta \hookrightarrow \phi$ as $(\lnot \theta) \vee (\theta \wedge \phi)$, where $\lnot \theta$ is the first order negation normal form expression equivalent to the negation of $\theta$. 
\end{defin}
In general, in Team Semantics $\theta \hookrightarrow \phi$ is not logically equivalent to the typical interpretation $\lnot \theta \vee \phi$ of the implication $\theta \rightarrow \phi$.\footnote{It is so if $\phi$ is downwards closed.} In \cite{galliani13e,galliani2016strongly} the same operator was written as $\phi \upharpoonright \theta$; here, however, we prefer to use the $\hookrightarrow$ notation as in the first occurrence of an operator of this type in the literature\footnote{The $\hookrightarrow$ operator of \cite{kontinennu09} had a more general semantics in order to deal with non first-order in the antecedent -- in short, according to \cite{kontinennu09} $\M \models_X \theta \hookrightarrow \theta$ if and only if $\M \models_Y \theta$ for all \emph{maximal} $Y \subseteq X$ which satisfy $\theta$. If $\theta$ is first order, it follows easily from Proposition \ref{propo:fo} that this is equivalent to definition given above.} \cite{kontinennu09} and as in recent literature in the area of Team Semantics (e.g. \cite{luck2018complexity}), in order to emphasize the ``implication-like'' qualities of this connective. 
\begin{propo}
	For all first order formulas $\theta$ and all formulas $\phi \in \FO([:])$, $\M \models_X \theta \hookrightarrow \phi$ if and only if $\M \models_{X\upharpoonright \theta} \phi$.
	\label{propo:restrict}
\end{propo}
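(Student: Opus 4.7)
The plan is to unwind the definition $\theta \hookrightarrow \phi := (\lnot\theta) \vee (\theta \wedge \phi)$ using the Team Semantics rule for disjunction, and then exploit the fact that $\theta$ is first order so that Proposition \ref{propo:fo} forces any witnessing split of $X$ to essentially coincide with the partition $X = (X\upharpoonright\lnot\theta) \cup (X\upharpoonright\theta)$.

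For the backward direction, suppose $\M \models_{X\upharpoonright\theta} \phi$. I would simply exhibit the canonical split $Y := X\upharpoonright \lnot\theta$ and $Z := X\upharpoonright \theta$. Clearly $X = Y \cup Z$. By Proposition \ref{propo:fo}, $\M \models_Y \lnot\theta$ (every assignment in $Y$ satisfies $\lnot\theta$ in Tarskian Semantics) and $\M \models_Z \theta$. Combined with the assumption $\M \models_Z \phi$ this gives $\M \models_Z \theta \wedge \phi$ by \textbf{TS-$\wedge$}, and hence $\M \models_X (\lnot\theta)\vee(\theta\wedge\phi)$ by \textbf{TS-$\vee$}.

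For the forward direction, suppose $\M \models_X \theta \hookrightarrow \phi$. By \textbf{TS-$\vee$} there exist teams $Y, Z$ with $X = Y \cup Z$, $\M \models_Y \lnot\theta$, and $\M \models_Z \theta \wedge \phi$. Applying Proposition \ref{propo:fo} to the first order formulas $\lnot\theta$ and $\theta$ respectively, every assignment of $Y$ satisfies $\lnot\theta$ in Tarskian Semantics while every assignment of $Z$ satisfies $\theta$; in particular $Z \subseteq X\upharpoonright\theta$, and conversely any $s \in X\upharpoonright\theta$ cannot lie in $Y$ and so must lie in $Z$. Hence $Z = X\upharpoonright\theta$, and the remaining conjunct $\M \models_Z \phi$ is exactly $\M \models_{X\upharpoonright\theta} \phi$.

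I do not expect any real obstacle here: the only subtle point is that in general disjunction in Team Semantics is not ``deterministic'' (any split works), but the first-orderness of $\theta$ pins down the split essentially uniquely via Proposition \ref{propo:fo}, and no downward closure or other hypothesis on $\phi$ is needed because the ``right'' restriction of $X$ appears already as the second piece of the split.
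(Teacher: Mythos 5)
Your proof is correct: the backward direction exhibits the canonical split $X = (X\upharpoonright\lnot\theta)\cup(X\upharpoonright\theta)$, and the forward direction correctly uses Proposition~\ref{propo:fo} to show that any witnessing split must have $Z = X\upharpoonright\theta$ exactly (both inclusions are argued properly, and the empty-team case is handled automatically). The paper states this proposition without proof, but your argument is precisely the ``easy consequence of Proposition~\ref{propo:fo}'' the author alludes to, so there is nothing to add.
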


As long as we are working with models with at least two elements it is possible to use the $\hookrightarrow$ operator to get rid of the second order quantification implicit in the Team Semantics rule for disjunctions, at the cost of adding further existential quantifiers: 
\begin{lemma}
	Let $\psi_1$ and $\psi_2$ be two formulas of $\FO([:])$, and let $q_1$, $q_2$ be two variables not occurring in either $\psi_1$ or $\psi_2$. Then $\psi_1 \vee \psi_2$ is logically equivalent to $\exists q_1 q_2 ((q_1 = q_2 \hookrightarrow \psi_1) \wedge (q_1 \not = q_2 \hookrightarrow \psi_2))$ over models with at least two elements. 
	\label{lemma:disj2harp}
\end{lemma}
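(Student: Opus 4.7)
The plan is to verify both directions of the equivalence directly from the Team Semantics rules, using Proposition \ref{propo:restrict} to handle the $\hookrightarrow$ and Proposition \ref{propo:local} to handle the fact that $q_1,q_2$ do not occur in $\psi_1$ or $\psi_2$.

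For the forward direction, suppose $\M\models_X \psi_1\vee\psi_2$, so there exist $Y,Z$ with $Y\cup Z=X$, $\M\models_Y\psi_1$ and $\M\models_Z\psi_2$. Since $|M|\geq 2$, pick distinct $a,b\in M$ and define a supplementation function $H:X\to\parts(M^2)\setminus\{\emptyset\}$ by setting $H(s)=\{(a,a),(a,b)\}$ for $s\in Y\cap Z$, $H(s)=\{(a,a)\}$ for $s\in Y\setminus Z$, and $H(s)=\{(a,b)\}$ for $s\in Z\setminus Y$. Because $Y\cup Z=X$, $H(s)$ is nonempty for every $s\in X$, so $H$ is a valid supplementation. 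Let $X'=X[H/q_1q_2]$. By construction, an extension of $s$ satisfying $q_1=q_2$ lies in $X'\upharpoonright q_1=q_2$ iff $s\in Y$, and similarly for $Z$ with $q_1\neq q_2$. Thus the restrictions of $X'\upharpoonright q_1=q_2$ and $X'\upharpoonright q_1\neq q_2$ to $\fv(\psi_1)$ and $\fv(\psi_2)$ respectively coincide with $Y_{|\fv(\psi_1)}$ and $Z_{|\fv(\psi_2)}$ (here one uses that $q_1,q_2$ do not occur in $\psi_1,\psi_2$). Locality then yields $\M\models_{X'\upharpoonright q_1=q_2}\psi_1$ and $\M\models_{X'\upharpoonright q_1\neq q_2}\psi_2$, which by Proposition \ref{propo:restrict} is exactly $\M\models_{X'}(q_1=q_2\hookrightarrow\psi_1)\wedge(q_1\neq q_2\hookrightarrow\psi_2)$, giving the right-hand side.

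For the backward direction, suppose $\M\models_X \exists q_1q_2((q_1=q_2\hookrightarrow\psi_1)\wedge(q_1\neq q_2\hookrightarrow\psi_2))$, witnessed by some supplementation function $H$, and let $X'=X[H/q_1q_2]$. Define $Y$ and $Z$ as the projections onto $\dom(X)$ of $X'\upharpoonright q_1=q_2$ and $X'\upharpoonright q_1\neq q_2$ respectively. Since $H(s)\neq\emptyset$ for each $s\in X$, any witness pair $(m_1,m_2)\in H(s)$ either satisfies $m_1=m_2$ (placing $s$ in $Y$) or $m_1\neq m_2$ (placing $s$ in $Z$), so $Y\cup Z=X$. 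From Proposition \ref{propo:restrict} we have $\M\models_{X'\upharpoonright q_1=q_2}\psi_1$ and $\M\models_{X'\upharpoonright q_1\neq q_2}\psi_2$, and by Locality these transfer to $\M\models_Y\psi_1$ and $\M\models_Z\psi_2$, yielding $\M\models_X \psi_1\vee\psi_2$.

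The only step that depends on the two-element assumption is the forward direction, where a genuinely distinct pair $(a,b)$ is needed to encode membership in $Z$ via $q_1\neq q_2$; otherwise every extension satisfies $q_1=q_2$ and the second conjunct cannot witness $Z$ unless $Z=\emptyset$. I expect the main subtlety (rather than a real obstacle) to be the careful bookkeeping to make $H(s)$ nonempty on all of $X$ while simultaneously ensuring that the two conditioned subteams project onto exactly $Y$ and $Z$; the three-case definition of $H$ above is designed precisely to make both requirements hold simultaneously.
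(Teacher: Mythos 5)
Your proof is correct, and it is exactly the routine verification the paper has in mind (the lemma is stated there without proof): encode membership in the two disjuncts via the values of $q_1,q_2$, use Proposition \ref{propo:restrict} to unpack $\hookrightarrow$, and use Locality to discard the auxiliary variables. Your three-case definition of $H$ correctly handles overlapping and empty disjunct teams, and you rightly isolate the forward direction as the only place where $|M|\geq 2$ is used.
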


Furthermore, the $\hookrightarrow$ operator commutes with the other operators: 
\begin{lemma}
	For all formulas $\theta, \theta_1, \theta_2 \in \FO$ and $\psi, \psi_1, \psi_2 \in \FO([:])$, 
\begin{itemize}
	\item $\theta_1 \hookrightarrow (\theta_2 \hookrightarrow \psi) \equiv (\theta_1 \wedge \theta_2) \hookrightarrow \psi$. 
\item $\theta \hookrightarrow (\psi_1 \wedge \psi_2) \equiv (\theta \hookrightarrow \psi_1) \wedge (\theta \hookrightarrow \psi_2)$; 
\item If the variable $y$ does not occur in $\theta$ then $\theta \hookrightarrow (\exists y \psi)\equiv \exists y (\theta \hookrightarrow \psi)$; 
\item If the variable $y$ does not occur in $\theta$ then $\theta \hookrightarrow (\forall y \psi) \equiv \forall y (\theta \hookrightarrow \psi)$.
\end{itemize}
	\label{lemma:harpinside}
\end{lemma}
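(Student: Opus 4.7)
The proof proposal is to unfold each $\hookrightarrow$ via Proposition \ref{propo:restrict}, which lets us replace $\M \models_X \theta \hookrightarrow \phi$ by $\M \models_{X \upharpoonright \theta} \phi$, and then reduce each equivalence to a set-theoretic identity about the conditioning operation $\upharpoonright$. The four cases are essentially routine once this translation is made, so the main content is checking that conditioning interacts correctly with supplementation and duplication in items (3) and (4).

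For item (1), after unfolding both sides via Proposition \ref{propo:restrict} one is left with comparing $(X \upharpoonright \theta_1) \upharpoonright \theta_2$ and $X \upharpoonright (\theta_1 \wedge \theta_2)$; these are plainly equal sets of assignments, since $\M \models_s \theta_1 \wedge \theta_2$ iff $\M \models_s \theta_1$ and $\M \models_s \theta_2$. For item (2), after unfolding the outer $\hookrightarrow$ on each side we must compare $\M \models_{X \upharpoonright \theta} \psi_1 \wedge \psi_2$ with the conjunction $\M \models_{X \upharpoonright \theta} \psi_1$ and $\M \models_{X \upharpoonright \theta} \psi_2$, which is immediate from the rule \textbf{TS-$\wedge$}.

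For item (4), unfolding gives $\M \models_{(X \upharpoonright \theta)[M/y]} \psi$ on the left and $\M \models_{X[M/y] \upharpoonright \theta} \psi$ on the right. Since $y$ does not occur in $\theta$, for any assignment $s$ and element $m$ we have $\M \models_s \theta$ iff $\M \models_{s[m/y]} \theta$, and therefore the teams $(X \upharpoonright \theta)[M/y]$ and $X[M/y] \upharpoonright \theta$ coincide.

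The one step that requires a little care is item (3). On the left side we get that $\M \models_{X \upharpoonright \theta} \exists y \psi$, i.e.\ there exists $H : X \upharpoonright \theta \to \parts(M) \setminus \{\emptyset\}$ with $\M \models_{(X \upharpoonright \theta)[H/y]} \psi$; on the right side we get an $H' : X \to \parts(M) \setminus \{\emptyset\}$ with $\M \models_{X[H'/y] \upharpoonright \theta} \psi$. The bridge between the two is the same observation used in (4): because $y \notin \fv(\theta)$, conditioning and supplementation commute in the sense that for any $H'$ defined on $X$ one has $X[H'/y] \upharpoonright \theta = (X \upharpoonright \theta)[H'_{|X \upharpoonright \theta}/y]$. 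Given $H$ witnessing the left side, extend it to $H'$ on $X$ by setting $H'(s) = \{m_0\}$ for some fixed $m_0 \in M$ whenever $s \notin X \upharpoonright \theta$; conversely, given $H'$, restrict it to $X \upharpoonright \theta$ to obtain a witness for the left side. The hypothesis $|M| \geq 2$ is not needed here, only the fact that $H(s)$ must be nonempty, which is why the fixed $m_0$ suffices. With this, all four equivalences follow, completing the proof.
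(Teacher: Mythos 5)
Your proof is correct, and it is exactly the routine verification the paper leaves implicit: unfold each $\hookrightarrow$ via Proposition \ref{propo:restrict} and check that conditioning commutes with conjunction, supplementation and duplication when $y$ does not occur in $\theta$. The key commutation identity $X[H'/y]\upharpoonright\theta = (X\upharpoonright\theta)[H'_{|X\upharpoonright\theta}/y]$ and the singleton-extension trick for item (3) are precisely what is needed, so nothing is missing.
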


Using the above results it is possible to prove the existence of the following normal form:
\begin{theo}[Normal Form for $\FO(\mathcal D)$]
	Let $\mathcal D = \{\mathbf D_1, \mathbf D_2, \ldots\}$ be any set of dependencies and let  $\phi$ be a sentence of $\FO(\mathcal D)$. Then $\phi$ is logically equivalent to some sentence $\phi'$ of the form 
\[
	\forall \tuple  x_1 \exists \tuple y_1 \ldots \forall \tuple x_n \exists \tuple y_n (\bigwedge_k (\theta_k(\tuple y_n) \hookrightarrow \mathbf D_{i_k} \tuple t_k) \wedge \psi(\tuple x, \tuple y)).
\]
	where the $\theta_k$ and $\psi$ are quantifier-free and contain no dependency atoms, and where furthermore each possible instance $\mathbf D_i \tuple t$ of every dependence atom $\mathbf D_i \in \mathcal D$ appears the same number of times in $\phi$ and in $\phi'$ and there are as many universal quantifiers in $\phi'$ as in $\phi$
	\label{thm:normalform}
\end{theo}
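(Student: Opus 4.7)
The plan is to chain the previously established results: prenex normal form (Theorem \ref{thm:pnf}), disjunction elimination via $\hookrightarrow$ (Lemma \ref{lemma:disj2harp}), and the commutation equivalences for $\hookrightarrow$ (Lemma \ref{lemma:harpinside}), in order to push the matrix of a prenex $\FO(\mathcal D)$-sentence into a conjunction of clauses, each of which is either a first-order literal (possibly guarded by $\hookrightarrow$) or a guarded dependency atom. The first-order clauses will then collapse into a single $\psi$, leaving precisely the shape stated in the theorem.

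I would first apply Theorem \ref{thm:pnf} to rewrite $\phi$ as a prenex sentence $Q_1 v_1 \ldots Q_m v_m \chi$ with $\chi$ quantifier-free; this step already preserves the count of universal quantifiers and the number of instances of each dependency atom $\mathbf D_i$. Since the tuples $\tuple x_i$ and $\tuple y_i$ in the target form are allowed to be empty, I can regroup the prefix into the alternating $\forall\exists$-shape appearing in the statement without changing the sentence. Next, I would eliminate every disjunction from $\chi$ by induction on its structure, replacing each subformula $\psi_1 \vee \psi_2$ by $\exists q_1 q_2 ((q_1 = q_2 \hookrightarrow \psi_1) \wedge (q_1 \neq q_2 \hookrightarrow \psi_2))$ using Lemma \ref{lemma:disj2harp} with fresh $q_1, q_2$; the standing assumption that every model has at least two elements validates this equivalence. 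Then, using $(\exists v \psi_1) \wedge \psi_2 \equiv \exists v (\psi_1 \wedge \psi_2)$ from Proposition \ref{propo:qmove} together with the $\exists$-case of Lemma \ref{lemma:harpinside}, I would pull all these new existential quantifiers out to extend the prenex block, renaming bound variables along the way to avoid clashes. The result is an equivalent prenex sentence whose matrix $\chi'$ has neither disjunctions nor quantifiers, and whose universal-quantifier and dependency-atom-instance counts still match those of the original $\phi$.

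I would then reshape $\chi'$ into the target conjunctive form using the remaining parts of Lemma \ref{lemma:harpinside}: distribute $\hookrightarrow$ over $\wedge$ via $\theta \hookrightarrow (\alpha \wedge \beta) \equiv (\theta \hookrightarrow \alpha) \wedge (\theta \hookrightarrow \beta)$, and collapse nested $\hookrightarrow$s via $\theta_1 \hookrightarrow (\theta_2 \hookrightarrow \alpha) \equiv (\theta_1 \wedge \theta_2) \hookrightarrow \alpha$. Repeated application yields $\chi' \equiv \bigwedge_j (\theta_j \hookrightarrow \alpha_j)$, where each $\alpha_j$ is either a first-order literal or a dependency-atom instance $\mathbf D_{i_k} \tuple t_k$, and each $\theta_j$ is quantifier-free and first-order (with a trivially true $\theta_j$ when no $\hookrightarrow$ ever landed above $\alpha_j$). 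By Propositions \ref{propo:restrict} and \ref{propo:fo}, the conjunction of all clauses whose $\alpha_j$ is a first-order literal is equivalent, in Team Semantics, to a single quantifier-free first-order formula $\psi$ obtained as the pointwise conjunction of the material implications $\theta_j \to \alpha_j$; the remaining clauses form precisely the $\bigwedge_k (\theta_k \hookrightarrow \mathbf D_{i_k} \tuple t_k)$ of the statement.

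The main obstacle is the bookkeeping during these manipulations: fresh variables must be chosen at each replacement to avoid capture, the inductive arguments for disjunction elimination and $\hookrightarrow$-pushing must be set up so that they actually terminate in a formula of the prescribed shape, and at each step one must verify that neither a dependency-atom occurrence nor a universal quantifier is created or destroyed, so that the matching-count conclusions of the theorem are preserved.
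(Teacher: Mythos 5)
Your proposal is correct and follows essentially the same route as the paper's own proof: prenex normal form via Theorem~\ref{thm:pnf}, elimination of disjunctions via Lemma~\ref{lemma:disj2harp}, extraction of the fresh existential quantifiers via Proposition~\ref{propo:qmove} and Lemma~\ref{lemma:harpinside}, flattening of nested and conjoined $\hookrightarrow$-consequents via Lemma~\ref{lemma:harpinside}, and finally absorbing the clauses with first-order consequents into $\psi$. Your extra remark justifying that last absorption through Propositions~\ref{propo:restrict} and~\ref{propo:fo} is a welcome (if minor) elaboration of a step the paper states without comment.
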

\begin{proof}
	First, let us rename variables so that no variable is bound in two different places in $\phi$ and no variable occurs both bound and free in $\phi$.\footnote{We do not discuss in detail here the effect of renaming variables in logics with Team Semantics, and remark only that there is no substantial difference between such logics and first order logic in this respect.} Then let us bring $\phi$ in prenex normal form $Q_1 v_1 \ldots Q_n v_n \psi$ for $\psi$ quantifier free, as per Theorem \ref{thm:pnf}. 

	Then let us get rid of disjunctions by Lemma \ref{lemma:disj2harp}, replacing each subformula $\psi_1 \vee \psi_2$ with $\exists q_1 q_2 (q_1 = q_2 \hookrightarrow \psi_1) \wedge (q_1 \not = q_2 \hookrightarrow \psi_2)$ for two new variables $q_1$ and $q_2$ (different for each disjunction). Then let us bring the newly introduced existential quantifiers outside of subexpressions too, using the transformations of Proposition \ref{propo:qmove} and Lemma \ref{lemma:harpinside} as required. Finally, again using the transformations of Lemma \ref{lemma:harpinside}, let us bring conjunctions outside the consequents of $\hookrightarrow$ operators and merge multiple occurrences of $\hookrightarrow$ of the form $\theta_1 \hookrightarrow(\theta_2 \hookrightarrow \psi)$ as $(\theta_1 \wedge \theta_2) \hookrightarrow \psi$. 

The final result will be an expression of the form
$\texttt Q_1 v_1 \ldots \texttt Q_n v_n \exists \tuple y \bigwedge_j (\theta_j(\tuple y) \hookrightarrow \alpha_j(\tuple v, \tuple y))$, 
	where all $\alpha_j$ are either occurrences $\mathbf D \tuple t$ of dependency atoms $\mathbf D \in \mathcal D$ or first order literals $\alpha$ and where the $\theta_j$ are quantifier-free conjunctions of first order literals with variables in $\tuple y$. This is easily seen to be the same as the required form, where we combined all $\theta_j \hookrightarrow \alpha_j$ for first order $\alpha_j$ into $\psi$. It is clear furthermore that no additional universal quantifiers or dependency atoms are introduced by this transformation.
\end{proof}

\subsection{Strongly First Order Dependencies}
\label{sec:stronglyfo}
Because of the higher order quantification hidden in the Team Semantics rules for disjunction and existential quantification, even comparatively simple first order dependencies such \emph{inclusion atoms} \cite{galliani12} $\tuple x \subseteq \tuple y := [R: \tuple x \tuple y] \forall \tuple u \tuple v (R \tuple u \tuple v \rightarrow \exists \tuple w R \tuple w \tuple u)$ or \emph{functional dependence atoms} \cite{vaananen07} $=\!\!(\tuple x ; y) := [R : \tuple x y] \forall \tuple u v_1 v_2 (R\tuple u v_1 \wedge R \tuple u v_2 \rightarrow v_1 = v_2)$ bring the expressive power of the logic well beyond that of First Order Logic. 

A dependency, or set of dependencies, is said to be \emph{strongly first order} if this is not the case: 
\begin{defin}[Strongly First Order Dependencies]
	Let $\mathcal D = \{\mathbf D_1, \mathbf D_2, \ldots\}$ be a set of dependencies. We say that $\mathcal D$ is \textbf{strongly first order} if and only if every sentence of $\FO(\mathcal D)$ is logically equivalent to some sentence of First Order Logic $\FO$. 
\end{defin}

It is important to emphasize here that the above definition asks merely that every \emph{sentence} of $\FO(\mathcal D)$ is equivalent to some sentence of $\FO$. As we saw in Corollary \ref{coro:nc}, not all first order properties of teams correspond to the satisfaction conditions of first order formulas in Team Semantics; but nonetheless, some of those properties may be added as dependencies to First Order Logic without increasing the expressive power of its sentences. We can ask then the following\\

\noindent \textbf{Question: } Are there non-trivial choices of $\mathcal D$ which are strongly first order?\\ 

This is a question of some importance not only because of its relevance to the classification of extensions of First Order Logic via Team Semantics but also because knowing which families of dependencies do not make the resulting logics computationally untreatable is essential for studying applications of Team Semantics in e.g. Database Theory (see for example \cite{kontinen13}). 

A positive answer to the above question was found in \cite{galliani13e}, in which the following result was found: 
\begin{theo}
	Let $\mathcal D^{\uparrow}$ be the family of all \emph{upwards closed} dependencies\footnote{That is, as per Definition \ref{def:props}, all $\mathbf D(R) \in \mathcal D$ must be such that $(M, R) \models \mathbf D(R), R \subseteq S \Rightarrow (M, S) \models \mathbf D(S)$; or equivalently, in terms of Team Semantics, all $\mathbf D \in \mathcal D$ are such that $\M \models_X \mathbf D \tuple t, X \subseteq Y \Rightarrow \M \models_Y \mathbf D \tuple t$.} and let $=\!\!(\cdot)$ be the family of all \emph{constancy dependencies}
	\[
		=\!\!(\cdot) := \forall \tuple x \tuple y (R \tuple x \wedge R \tuple y \rightarrow \tuple x = \tuple y)
	\]
	of all arities.\footnote{It is straightforward, however, to see that constancy dependencies of arity one suffice to define the others: for instance, $=\!\!(xy) \equiv =\!\!(x) \wedge =\!\!(y)$.} Then $\mathcal D^{\uparrow} \cup =\!\!(\cdot)$ is strongly first order. 
	\label{thm:uc}
\end{theo}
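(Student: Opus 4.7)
My starting point is Theorem \ref{thm:normalform}, which lets me write $\phi$ as
\[
\forall \tuple x_1 \exists \tuple y_1 \ldots \forall \tuple x_n \exists \tuple y_n \Bigl(\bigwedge_k (\theta_k(\tuple y_n) \hookrightarrow \delta_k) \wedge \psi\Bigr),
\]
where each $\delta_k$ is either a constancy atom $=\!\!(\tuple t_k)$ or an upward-closed atom $\mathbf D_{i_k} \tuple t_k$. My intention is to translate the whole sentence into a single pure first-order sentence, rather than dealing with the two families of atoms separately: a naive ``outer $\exists \tuple c_k$'' encoding of constancy breaks in the presence of upward-closed atoms, since multivalued existential choices in team semantics give constancy more power than its Tarskian reading.

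The uniform translation is based on viewing the team produced by the prenex as the graph of a multivalued Skolem strategy and expressing each atom as a first-order condition on finitely many ``witness paths'' through the prenex. For each constancy atom $=\!\!(\tuple t_k)$ I would introduce two fresh copies of the whole $\forall \exists$ sequence, say $(\tuple x_1', \tuple y_1', \ldots, \tuple x_n', \tuple y_n')$ and $(\tuple x_1'', \tuple y_1'', \ldots, \tuple x_n'', \tuple y_n'')$, and assert that whenever both copies satisfy $\theta_k$ their $\tuple t_k$-values coincide. For each upward-closed atom I would apply the \L{}o\'s--Tarski preservation theorem to write $\mathbf D_{i_k}(R)$ as a positive existential $\bigvee_j \exists \tuple w \bigl(\bigwedge_\ell R \tuple w_\ell \wedge \alpha^{(j)}(\tuple w)\bigr)$, and introduce one further witness-path copy per required $\tuple w_\ell$ together with the assertion $\theta_k \wedge \tuple w_\ell = \tuple t_k$ on that copy. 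All copies are tied together by Skolem-consistency clauses saying that any two paths sharing an $\tuple x_1 \ldots \tuple x_i$-prefix also share $\tuple y_1 \ldots \tuple y_i$. The resulting sentence is entirely first-order.

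The main obstacle I expect is proving the equivalence between $\phi$ and this translation. One direction, from $\phi$ to the translation, should follow by extracting the finitely many witness paths required by \L{}o\'s--Tarski from an existing strategy and by picking any two prenex paths as witnesses for each constancy condition. The more delicate direction requires reconstructing a team-semantic strategy for $\phi$ from a Tarskian model of the translation: upward closure of the $\mathbf D_{i_k}$'s should ensure that adding the explicit witness tuples to the team produced by that strategy does not disturb any other atom, the flatness of $\psi$ and of the constancy equalities should guarantee pointwise preservation on the remaining assignments, and the global Skolem-consistency clauses are exactly what is needed to amalgamate all the committed witness paths into a single multivalued strategy.
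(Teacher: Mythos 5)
There are two genuine gaps here, and both are fatal to the construction as described. The first is the preservation-theorem step. A dependency $\mathbf D(R)\in\mathcal D^{\uparrow}$ is preserved under enlarging the \emph{relation} $R$ with the domain held fixed; the relevant classical result is Lyndon's positivity theorem (equivalence to a sentence with only positive occurrences of $R$), not \L{}o\'s--Tarski, and positivity in $R$ still permits universal quantification over the domain. Upwards closed dependencies are therefore \emph{not} in general equivalent to an existential-positive form $\bigvee_j \exists \tuple w(\bigwedge_\ell R\tuple w_\ell \wedge \alpha^{(j)}(\tuple w))$. The totality atom $\All(\tuple x)=[R:\tuple x]\forall\tuple v\, R\tuple v$ --- the central example of this very paper --- is upwards closed and cannot be certified by finitely many witness tuples in $R$: as Lemma \ref{lemma:total} shows, its correct first-order encoding needs an outer \emph{universal} quantifier $\forall\tuple z$ with a witness path through the prenex depending on $\tuple z$. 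Your finite-witness-path scheme simply cannot express such atoms, so the translation does not exist in the form you describe.

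The second gap is that the Skolem-consistency clauses overshoot. Requiring all copies (including the witness paths) to agree on shared prefixes restricts attention to single-valued strategies, but sentences with upwards closed atoms genuinely require multivalued ones: $\exists y\,\nc(y)$ is true in every model with at least two elements, yet $\nc$'s two existential witnesses share the empty universal prefix, so under your clauses they must produce the same value of $y$ and the translation becomes unsatisfiable. Dropping the consistency requirement on the witness paths, on the other hand, reintroduces exactly the constancy-versus-multivaluedness interaction you correctly flag in your opening paragraph (constancy wants the team thin, upwards closure wants it fat). That tension is the real content of the theorem, and the proposal defers it to the assertion that the consistency clauses are ``exactly what is needed to amalgamate'' without an argument. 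Note also that this paper does not prove Theorem \ref{thm:uc} --- it imports it from \cite{galliani13e} --- and the proof there takes a different route that sidesteps both problems: constancy atoms in a prenex sentence are first traded for ordinary existential first-order quantifiers at the front (legitimate at the level of sentences), after which only upwards closed atoms remain and one argues that if \emph{any} multivalued strategy verifies the normal form then the inclusion-maximal one does, whose generated team is first-order definable, turning each remaining atom into a first-order condition on the model.
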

In \cite{galliani2016strongly} it was furthermore shown that all \emph{unary} first-order dependencies are definable in $\FO(\mathcal D^{\uparrow}, =\!\!(\cdot))$,\footnote{In this work we will commit a minor notational abuse here and write $\FO(\mathcal D^{\uparrow}, =\!\!(\cdot))$ instead of $\FO(\mathcal D^{\uparrow} \cup =\!\!(\cdot))$ and so forth.} and hence do not increase the expressive power of First Order Logic if added to it. 

It is still unknown, however, whether the above result is a characterization of \emph{all} strongly first order families of dependencies. In other words, the following problem is still open:\\

\noindent \textbf{Open Conjecture:} Let $\mathcal D$ be a strongly first order family of dependencies. Then every $\mathbf D \in \mathcal D$ is definable in $\FO(=\!\!(\cdot), \mathcal D^\uparrow)$.

\section{Safe Dependencies}

By definition, a class $\mathcal D$ of dependencies is strongly first order if and only if $\FO(\mathcal D)$ is no more expressive than $\FO$ over sentences. In many cases, this is perhaps too restrictive a notion: indeed, it may be that instead we have already a family $\mathcal D$ of dependencies whose expressive power is suitable for our needs (for instance, as in the case of inclusion dependencies, that captures the PTIME complexity class over finite ordered structures) and we may be interested in characterizing the families $\mathcal E$ that do not further increase it if added to the language. This justifies the following, more general notion: 

\begin{defin}
Let $\mathcal D = \{\mathbf D_1 \ldots \mathbf D_n\}$ be a set of dependencies. Another set of dependencies $\mathcal E$ is \textbf{safe} for $\mathcal D$ if any sentence of $\FO(\mathcal E, \mathcal D)$ is equivalent to some sentence of $\FO(\mathcal D)$. 
%
\end{defin}

It is obvious that strongly first orderness is a special case of safety: 
\begin{propo}
	A family $\mathcal D$ of dependencies is strongly first order if and only if it is safe for the empty set of dependencies $\emptyset$.
\end{propo}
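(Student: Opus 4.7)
The plan is simply to unfold both definitions and observe that they coincide literally; there is no real content to hide. The statement is essentially a sanity check that the new notion of safety properly generalizes the previously studied notion of strongly first order family.

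First I would note that $\FO(\mathcal D, \emptyset)$ is, by the definition of $\FO(\mathcal E, \mathcal D)$, exactly the same logic as $\FO(\mathcal D)$, since adjoining no new dependency atoms leaves the set of formulas unchanged. Similarly, $\FO(\emptyset)$ is exactly first order logic $\FO$ with Team Semantics, because no dependency atoms are introduced.

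With these two identifications in hand, the definition of ``$\mathcal D$ is safe for $\emptyset$'' reads: every sentence of $\FO(\mathcal D, \emptyset) = \FO(\mathcal D)$ is equivalent to some sentence of $\FO(\emptyset) = \FO$. But this is, word for word, the definition of $\mathcal D$ being strongly first order. Hence the two conditions are not merely equivalent but in fact identical once the trivial notational reductions are performed, and the biconditional follows immediately in both directions.

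There is no substantive obstacle here; the only thing worth double-checking is that the convention $\FO(\mathcal D, \mathcal E) = \FO(\mathcal D \cup \mathcal E)$ (mentioned earlier in the paper as a ``minor notational abuse'') behaves correctly when one of the arguments is empty, which is clear from the definition of $\FO(\mathcal D)$ given above. Thus the proof is a one-line remark stating that unfolding the definitions in the $\mathcal E = \emptyset$ case of safety recovers verbatim the definition of strongly first order.
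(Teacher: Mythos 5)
Your proof is correct and matches the paper's intent exactly: the paper states this proposition without proof as an immediate consequence of the definitions, and your unfolding of ``safe for $\emptyset$'' into the definition of strongly first order (via the identifications $\FO(\mathcal D, \emptyset) = \FO(\mathcal D)$ and $\FO(\emptyset) = \FO$) is precisely the intended one-line argument.
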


Furthermore, it is trivial to see that definable dependencies are always safe: 
\begin{propo}[Definable Dependencies are safe]
	Let $\mathcal D$ and $\mathcal E$ be two families of dependencies such that for all $\mathbf E \in \mathcal E$ there exists some formula $\psi_{\mathbf E}(\tuple v) \in \FO(\mathcal D)$ such that 
		$\M \models_X \mathbf E \tuple v \Leftrightarrow \M \models_X \psi_{\mathbf E}(\tuple v)$
	for all models $\M$, tuples $\tuple v$ of distinct variables of length equal to the arity of $\mathbf E$, and teams $X$ over $\M$ with domain $\tuple v$.  Then $\mathcal E$ is safe for $\mathcal D$.
\end{propo}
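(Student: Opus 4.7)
The plan is to prove the stronger statement that every \emph{formula} (not just every sentence) $\phi \in \FO(\mathcal D, \mathcal E)$ is logically equivalent to some formula $\phi^* \in \FO(\mathcal D)$, by structural induction on $\phi$. Sentences are a special case, so this suffices. For first order literals and for dependency atoms $\mathbf D_i \tuple t$ with $\mathbf D_i \in \mathcal D$, we set $\phi^* = \phi$. For the connective and quantifier cases, we apply the translation compositionally: $(\psi_1 \wedge \psi_2)^* = \psi_1^* \wedge \psi_2^*$, $(\psi_1 \vee \psi_2)^* = \psi_1^* \vee \psi_2^*$, $(\exists v \psi)^* = \exists v \psi^*$, $(\forall v \psi)^* = \forall v \psi^*$. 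The Team Semantics rules $\mathbf{TS}$-$\vee$, $\mathbf{TS}$-$\wedge$, $\mathbf{TS}$-$\exists$ and $\mathbf{TS}$-$\forall$ then guarantee that each of these is correct.

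The only nontrivial case is an atom $\mathbf E \tuple t$ for $\mathbf E \in \mathcal E$. The hypothesis provides a formula $\psi_{\mathbf E}(\tuple v) \in \FO(\mathcal D)$ equivalent to $\mathbf E \tuple v$, but only when $\tuple v$ is a tuple of \emph{distinct variables} of the right length, whereas in general $\tuple t$ may contain repetitions or non-variable terms. I would handle this by choosing a tuple $\tuple v = v_1 \ldots v_k$ of fresh distinct variables (with $k$ the arity of $\mathbf E$), and defining
\[
   (\mathbf E \tuple t)^* \;=\; \exists \tuple v \,\bigl(\tuple v = \tuple t \,\wedge\, \psi_{\mathbf E}(\tuple v)\bigr),
\]
where $\tuple v = \tuple t$ abbreviates the conjunction $\bigwedge_i v_i = t_i$ of first order equality literals, and where $\psi^*_{\mathbf E}$ is taken to already be in $\FO(\mathcal D)$.

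To verify this equivalence, observe that any supplementation function $H : X \to \parts(M^k) \setminus \{\emptyset\}$ witnessing $\M \models_X \exists \tuple v \,(\tuple v = \tuple t \,\wedge\, \psi_{\mathbf E}(\tuple v))$ is forced by the equality literals $\tuple v = \tuple t$ to satisfy $H(s) = \{s(\tuple t)\}$ for every $s \in X$; hence the only candidate team is $X' = \{s[s(\tuple t)/\tuple v] : s \in X\}$, for which $X'(\tuple v) = X(\tuple t)$. By the assumed equivalence of $\psi_{\mathbf E}(\tuple v)$ with $\mathbf E \tuple v$ on such teams (and Proposition \ref{propo:local} to drop irrelevant variables), $\M \models_{X'} \psi_{\mathbf E}(\tuple v)$ iff $\mathbf E(R)$ holds with $R$ interpreted as $X'(\tuple v) = X(\tuple t)$, which is exactly the semantics of $\mathbf E \tuple t$ on $X$.

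The main obstacle is the mismatch between the distinct-variable hypothesis on $\psi_{\mathbf E}$ and the need to evaluate $\mathbf E$ at arbitrary term tuples; the $\exists \tuple v\,(\tuple v = \tuple t \wedge \cdot)$ gadget neatly eliminates this gap. Everything else is routine compositional bookkeeping, and the final translation $\phi^*$ is manifestly a formula of $\FO(\mathcal D)$.
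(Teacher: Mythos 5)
The paper states this proposition without proof, dismissing it as trivial; your compositional-substitution argument is exactly the intended one and is correct. The only point that genuinely requires care --- evaluating $\mathbf E$ at an arbitrary tuple of terms when the hypothesis only covers tuples of distinct variables over teams with domain exactly $\tuple v$ --- is handled properly by your $\exists \tuple v\,(\tuple v = \tuple t \wedge \psi_{\mathbf E}(\tuple v))$ gadget, whose equality conjuncts force the supplementation to be $H(s) = \{s(\tuple t)\}$, combined with locality (Proposition \ref{propo:local}) to pass from teams with larger domain to teams with domain $\tuple v$.
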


Are all dependencies (or families of dependencies) which are safe for some $\mathcal D$ definable in it? In general, this cannot be true: as we saw in Corollary \ref{coro:nc}, non-constancy dependencies 
\[
	\nc(\tuple v) := [R : \tuple v] \exists \tuple x \tuple y (R \tuple x \wedge R \tuple y \wedge \tuple x \not = \tuple y)
\]
are not definable in $\FO = \FO(\emptyset)$, but since they are upwards closed we know by Theorem \ref{thm:uc} that they are strongly first order (and, therefore, safe for $\emptyset$). Or, to mention another example, \emph{all} families of dependence atoms are safe for the functional dependence atoms of Dependence Logic: indeed, Dependence Logic is equivalent to full Existential Second Order Logic $\Sigma_1^1$ on the level of sentences \cite{vaananen07}, and it is straightforward to see that $\FO(\mathcal D)$ is contained in $\Sigma_1^1$ for all choices of $\mathcal D$. However, for instance, the above-mentioned non-constancy atoms are certainly not definable in Dependence Logic because of Proposition \ref{propo:pres}, since functional dependencies are downwards closed while they are not. 

Classes of dependencies for which safety and definability coincide may be called \emph{closed}: 
\begin{defin}[Closed Classes of Dependencies]
	Let $\mathcal D$ be a class of dependencies. Then $\mathcal D$ is closed if and only if every $\mathcal E$ which is safe for $\mathcal D$ contains only dependencies which are definable in $\FO(\mathcal D)$. 
\end{defin}

A class $\mathcal D$ of dependencies, in other words, is closed if all dependencies that may be added to $\FO(\mathcal D)$ without increasing its expressive power are already expressible in terms of $\FO(\mathcal D)$. The class of all first order dependencies is trivially closed; and, for instance, it follows easily from known results \cite{galliani12} that, since all those dependencies are definable in terms of \emph{independence atoms} \cite{gradel13} $\tuple y \bot_{\tuple x} \tuple z := [R: \tuple x \tuple y \tuple z] \forall \tuple u \tuple v_1 \tuple w_1 \tuple v_2 \tuple w_2 (R \tuple u \tuple v_1 \tuple w_1 \wedge R \tuple u \tuple v_2 \tuple w_2 \rightarrow R \tuple u \tuple v_1 \tuple w_2)$ and \emph{nonemptiness atoms} $\nonempty(x) := [V: x]\exists u V u$, any family containing these two types of dependencies is closed. On the other hand, the family $\mathcal D^\downarrow$ of all downwards closed dependencies is \emph{not} closed in the sense of the above definition, since inclusion atoms and independence atoms are safe for it despite not being downwards closed (and, therefore, not being definable in terms of downwards closed atoms).

The problem of characterizing other, weaker closed classes of dependencies is entirely open, and a complete solution of it would go a long way in providing a classification of the extensions of first order logic via first order dependencies. In particular, the conjecture mentioned in Section \ref{sec:stronglyfo} has the following, equivalent formulation:\\

\noindent \textbf{Open Conjecture (equivalent formulation):} Let $\mathcal D^{\uparrow}$ be the class of all upwards closed dependencies and let $=\!\!(\cdot)$ be the class of all constancy dependencies. Then $\mathcal D^{\uparrow} \cup =\!\!(\cdot)$ is closed. \\

Answering this conjecture, and more in general characterizing the closed families of dependencies, is left to future work. In the rest of this work, a few preliminary results will be presented that provide some information about the properties of the notion of safety.

\section{The Safety of Totality, Inconstancy, Nonemptiness and Possibility}
A natural question to consider to begin exploring the properties of safety is the following: are there dependencies which are safe for \emph{all} families of dependencies $\mathcal D$? As we will see, the answer is positive, as shown by the \emph{totality atoms} $\All(\tuple x) = [R: \tuple x] \forall \tuple v R \tuple v$. 
%

\begin{lemma}
	Let $\phi$ be a $\FO(\All, \mathcal D)$ sentence of the form  $\forall \tuple x_1 \exists \tuple y_1 \ldots \forall \tuple x_n \exists \tuple y_n ( (\theta(\tuple y_n) \hookrightarrow \All(\tuple t)) \wedge \chi(\tuple x, \tuple y))$,  where $\theta$ is first order and $\tuple t$ is a tuple of terms with variables in $\tuple x\tuple y = \tuple x_1 \ldots \tuple x_n \tuple y_1 \ldots \tuple y_n$. \\
	
	Then $\phi$ is logically equivalent to the expression 
	\[
		\forall \tuple z \exists \tuple x_1' \tuple y_1' \ldots \tuple x_n' \tuple y_n'
		\left(\theta(\tuple y_n') \wedge \tuple t' = \tuple z \wedge 
		\forall p q \tuple x_1 \exists \tuple y_1 \ldots \forall \tuple x_n \exists \tuple y_n 
		\left(\bigwedge_i (p=q \wedge \bigwedge_{j\leq i} \tuple x_j = \tuple x'_j) \hookrightarrow \tuple y_i = \tuple y'_i\right) \wedge \chi(\tuple x, \tuple y)\right)
	\]
	where $\tuple z$ is a new tuple of variables of the same arity as $\tuple t$, all $\tuple x'_i$ and $\tuple y'_i$ are tuples of new, pairwise distinct variables of the same arities of the corresponding $\tuple x_i$, $\tuple y_i$, and $\tuple t'$ is obtained from the tuple of terms $\tuple t$ by replacing each variable in $\tuple x_i$ or $\tuple y_i$  with the corresponding variable in $\tuple x'_i$ or $\tuple y'_i$, for all $i$.
	\label{lemma:total}
\end{lemma}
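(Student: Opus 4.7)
The plan is to prove $\phi \equiv \phi'$ by constructing witnessing supplementations in each direction. The crucial device is the pair $p, q$ of universally quantified variables in the inner formula: its diagonal $p=q$ sub-team is where the layered trace constraints bite, forcing the inner $\tuple y_i$'s to match the outer existential witnesses $(\tuple x'_i(\tuple z), \tuple y'_i(\tuple z))$, while the off-diagonal $p \neq q$ sub-team supplies the flexibility needed to satisfy $\chi$ at configurations that lie on no trace.

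For the direction $\phi \Rightarrow \phi'$, assume supplementations $H_1, \ldots, H_n$ witness $\phi$, producing a final team $X$ satisfying both $\theta \hookrightarrow \All(\tuple t)$ and $\chi$. For each $\tuple z$, the $\All$ atom yields some $s_{\tuple z} \in X$ with $\M \models_{s_{\tuple z}} \theta$ and $s_{\tuple z}(\tuple t) = \tuple z$; setting $\tuple x'_i(\tuple z) = s_{\tuple z}(\tuple x_i)$ and $\tuple y'_i(\tuple z) = s_{\tuple z}(\tuple y_i)$ gives the outer existential witnesses, and I reuse the same $H_i$ (now trivially extended over the extra coordinates $\tuple z, p, q$) as the inner supplementations. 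Then $\chi$ carries over verbatim, and each trace constraint holds on the sub-team determined by $p=q$ and $\tuple x_j = \tuple x'_j(\tuple z)$ for $j \leq i$ because $\tuple y'_i(\tuple z)$ is already a permissible value of $H_i$ by the choice of $s_{\tuple z}$.

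For the harder direction $\phi' \Rightarrow \phi$, I pass from the $\tuple z$-dependent inner supplementations of $\phi'$ to a single $H'_i$ for $\phi$ by taking
\[
H'_i(\tuple x_1, \ldots, \tuple x_i, \tuple y_1, \ldots, \tuple y_{i-1}) = \{\tuple y'_i(\tuple z) : \tuple x'_j(\tuple z) = \tuple x_j \text{ and } \tuple y'_k(\tuple z) = \tuple y_k \text{ for all } j \leq i,\, k < i\},
\]
augmented by a $\chi$-satisfying default value whenever this set is empty; such a default can be extracted from the inner supplementations of $\phi'$ at some fixed $(\tuple z_0, p \neq q)$, where the trace constraint is vacuous. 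The full trace of each $\tuple z$ then appears in the final team of $\phi$, witnessing both $\theta(\tuple y_n)$ on those assignments and $\All(\tuple t)$ on the $\theta$-subteam.

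The main obstacle is verifying that $\chi$ holds on every assignment of this team, not just on the pure traces. The essential observation is that the matching condition $\bigwedge_{j \leq i} \tuple x_j = \tuple x'_j$ is monotone in $i$, so that any assignment built by choosing $\tuple y_i = \tuple y'_i(\tuple z_i)$ at each stage forces the witnesses $\tuple z_i$ into a chain of progressively tighter prefix agreements; the final $\tuple z_n$ then witnesses the whole assignment as its own trace, on which $\chi$ holds by $\phi'$. The role of the $p \neq q$ half of the inner game is precisely to absorb the default continuations without corrupting $\chi$, and making this bookkeeping rigorous is where most of the work of the proof lies.
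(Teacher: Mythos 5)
Your high-level reading of the formula is the right one: the diagonal $p=q$ part is meant to force the trace of each $\tuple z$ into the team (yielding $\All(\tuple t)$ on the $\theta$-part), while the off-diagonal part keeps the rest of the team available for $\chi$. Both directions, however, have gaps in the execution, and both stem from treating team-level conditions as if they could be checked value-by-value. In the direction $\phi \Rightarrow \phi'$, reusing the supplementation functions $H_i$ unchanged does not satisfy the trace constraints: by Proposition \ref{propo:restrict}, the conjunct $(p=q \wedge \bigwedge_{j\leq i}\tuple x_j = \tuple x'_j) \hookrightarrow \tuple y_i = \tuple y'_i$ requires that on the \emph{entire} subteam where $p=q$ and $\tuple x_1,\ldots,\tuple x_i$ match the trace, the variable $\tuple y_i$ takes \emph{only} the value $\tuple y'_i$. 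That $\tuple y'_i$ is \emph{a} permissible value of $H_i$ is not enough, since $H_i$ is in general multi-valued and will place other values of $\tuple y_i$ on that subteam. You must replace $H_i$ by the singleton $\{s_{\tuple z}(\tuple y_i)\}$ on the matching diagonal, and then verify that this does not change the restriction of the resulting team to $\tuple x\tuple y$ (the forced branch is itself an $H$-path through $X$ because $s_{\tuple z}\in X$, and the $p\neq q$ half regenerates all of $X$), so that $\chi$ survives by locality.

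The gap in the direction $\phi' \Rightarrow \phi$ is more serious. Your supplementation functions are built only from the traces $\tuple y'_i(\tuple z)$ plus default values, and your verification of $\chi$ proceeds assignment by assignment (``every assignment \ldots is its own trace, on which $\chi$ holds''). But $\chi$ is not first order here: in the intended application (Theorem \ref{thm:nonemptysafe}) it contains the remaining $\hookrightarrow$-guarded atoms of $\mathcal D$ and the other totality atoms, so it is neither flat nor downwards nor union closed, and its truth on a team cannot be inferred from its behaviour on individual traces, let alone on traces spliced with default continuations. What $\phi'$ actually gives you is that $\chi$ holds on the single team $Y$ produced by the inner quantifier block over \emph{all} values of $\tuple z$ simultaneously. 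The repair is to discard the trace-plus-default construction and take as witness team exactly $Y_{|\tuple x\tuple y}$: this satisfies $\chi$ by Proposition \ref{propo:local}, contains for each $\tuple z$ the endpoint of its trace (the $p=q$, fully matching branch), hence satisfies $\theta(\tuple y_n)\hookrightarrow\All(\tuple t)$, and is generable in the form $\{\epsilon\}[M/\tuple x_1][H_1/\tuple y_1]\cdots[M/\tuple x_n][H_n/\tuple y_n]$ by letting $H_i$ be the union of the inner supplementation functions over all histories $(\tuple z, \tuple x', \tuple y', p, q)$ compatible with the current values of $\tuple x_1,\ldots,\tuple x_i,\tuple y_1,\ldots,\tuple y_{i-1}$. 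Showing that these unions generate exactly $Y_{|\tuple x\tuple y}$ is the step your proposal skips, and it is where the real work of this direction lies.
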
 

Using the normal form of Theorem \ref{thm:normalform}, it is now straightforward to show that totality is safe for all families of dependencies:
\begin{theo}[Totality is safe for all $\mathcal D$]
Let $\mathcal D$ be any set of dependencies, and let $\phi \in \FO(\All, \mathcal D)$ be a sentence. Then $\phi$ is equivalent to some $\phi'$ in $\FO(\mathcal D)$. 
	\label{thm:nonemptysafe}
\end{theo}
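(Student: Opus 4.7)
The plan is to proceed by induction on the number $m$ of totality-atom occurrences in $\phi$, using Theorem \ref{thm:normalform} to bring $\phi$ into normal form and Lemma \ref{lemma:total} to peel off one totality atom per inductive step. First I would apply Theorem \ref{thm:normalform} to obtain an equivalent sentence $\phi_0$ of the shape
\[
\forall \tuple x_1 \exists \tuple y_1 \ldots \forall \tuple x_n \exists \tuple y_n \left(\bigwedge_{k=1}^{N} \bigl(\theta_k(\tuple y_n) \hookrightarrow \mathbf{D}_{i_k} \tuple t_k\bigr) \wedge \psi(\tuple x, \tuple y)\right),
\]
where each $\mathbf{D}_{i_k}$ is either $\All$ or a member of $\mathcal D$. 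Since Theorem \ref{thm:normalform} preserves the count of each dependency atom, the number of conjuncts with $\mathbf{D}_{i_k} = \All$ equals $m$.

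For the base case $m = 0$, the sentence $\phi_0$ already lies in $\FO(\mathcal D)$ and we are done. For the inductive step $m > 0$, I would pick some index $j$ with $\mathbf{D}_{i_j} = \All$ and absorb every other conjunct together with $\psi$ into a single auxiliary formula
\[
\chi(\tuple x, \tuple y) := \bigwedge_{k \neq j} \bigl(\theta_k \hookrightarrow \mathbf{D}_{i_k} \tuple t_k\bigr) \wedge \psi,
\]
so that $\phi_0$ displays exactly the shape required by Lemma \ref{lemma:total}, namely $\forall \tuple x_1 \exists \tuple y_1 \ldots \forall \tuple x_n \exists \tuple y_n((\theta_j(\tuple y_n) \hookrightarrow \All(\tuple t_j)) \wedge \chi(\tuple x, \tuple y))$. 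Invoking Lemma \ref{lemma:total} yields an equivalent sentence $\phi_1$ in which the selected $\All$-atom has been replaced by purely first-order scaffolding (new quantifiers, equalities, and $\hookrightarrow$-implications with equality consequents). Because $\chi$ is transported through the lemma unchanged, $\phi_1$ still belongs to $\FO(\All, \mathcal D)$ but has exactly $m-1$ occurrences of totality atoms. The induction hypothesis then supplies the desired $\phi' \in \FO(\mathcal D)$ equivalent to $\phi_1$, and hence to $\phi$.

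The main obstacle I anticipate is justifying that Lemma \ref{lemma:total} is legitimately applicable in the inductive step, since $\chi$ may itself contain further dependency atoms, including remaining $\All$-occurrences as well as atoms from $\mathcal D$. The statement of the lemma places a first-order restriction only on the antecedent $\theta$ of the singled-out implication, not on $\chi$, and its semantic content is really just an unpacking of $\hookrightarrow \All(\tuple t)$ that does not inspect $\chi$ at all, so this is in fact permitted. A secondary, purely bookkeeping concern is that $\phi_1$ need not itself be in the exact prenex normal form prescribed by Theorem \ref{thm:normalform}; this is harmless, since one can simply reapply Theorem \ref{thm:normalform} at the start of the next iteration, and that reapplication leaves the totality count at $m-1$.
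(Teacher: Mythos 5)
Your proposal is correct and follows essentially the same route as the paper: normalize via Theorem \ref{thm:normalform}, eliminate the totality atoms one at a time with Lemma \ref{lemma:total}, and renormalize between steps, noting that renormalization introduces no new dependency atoms so the process terminates. Your explicit justification that Lemma \ref{lemma:total} tolerates remaining dependency atoms inside $\chi$ is a point the paper leaves implicit, but it matches the lemma as stated.
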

\begin{proof}
	By Theorem \ref{thm:normalform}, we can assume that $\phi \in \FO(\All, \mathcal D)$ is of the form 
\[
	\forall \tuple  x_1 \exists \tuple y_1 \ldots \forall \tuple x_n \exists \tuple y_n (\bigwedge_k (\theta_k(\tuple y_n) \hookrightarrow \All(\tuple t_k)) \wedge \psi(\tuple x, \tuple y))
\]
	where $\nonempty$ does not occur in $\psi$. Then we get rid of the totality atoms one at a time, using the above lemma and renormalizing. As the normalization procedure of Theorem \ref{thm:normalform} does not introduce further dependency atoms, the procedure will eventually terminate in a sentence without totality atoms. Thus, $\phi$ is equivalent to some sentence $\phi' \in \FO(\mathcal D)$. 
\end{proof}

From the safety of totality it follows at once that all dependencies that are definable in $\FO(\All)$ are also safe. For instance:
\begin{coro}
	The non-constancy dependencies $\nc(R) := \exists \tuple x \exists \tuple y (R \tuple x \wedge R \tuple y \wedge \tuple x \not = \tuple y)$, for which $\M \models_X \nc (\tuple v)$ if and only if $\tuple v$ takes at least two values in $X$, are safe for all $\mathcal D$. So are the nonemptiness dependencies $\nonempty(R) := \exists \tuple v R \tuple v$, for which $\M \models_X \nonempty (\tuple v)$ if and only if $|X(\tuple v)| > 0$.
	\label{coro:ncne}
\end{coro}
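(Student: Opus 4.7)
The plan is to leverage the observation in the paragraph preceding the statement, namely that any dependency definable in $\FO(\All)$ is safe for every $\mathcal D$. Thus it suffices to produce $\FO(\All)$-definitions of $\nonempty(\tuple v)$ and $\nc(\tuple v)$, after which the safety of definable dependencies combined with Theorem \ref{thm:nonemptysafe} gives the corollary.

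For nonemptiness I would take $\nonempty(\tuple v) \equiv \exists y\, \All(y)$, where $y$ is a fresh variable. Unwinding, $\M \models_X \exists y\, \All(y)$ holds iff some supplementation $H$ makes $\bigcup_{s \in X} H(s) = M$, which is possible precisely when $X$ (equivalently, $X(\tuple v)$) is nonempty. For non-constancy I would propose
\[
\nc(\tuple v) \equiv \forall \tuple b\,\bigl((\tuple v \neq \tuple b) \hookrightarrow \All(\tuple b)\bigr),
\]
with $\tuple b$ a fresh tuple of the same arity as $\tuple v$. The idea is that $\forall \tuple b$ ranges over every potential constant value $\tuple m \in M^{|\tuple v|}$, while $\All(\tuple b)$ applied to the subteam restricted by $\tuple v \neq \tuple b$ forces each such $\tuple m$ to be witnessed as unequal to $s(\tuple v)$ for some $s \in X$ --- precisely what distinguishes $|X(\tuple v)| \geq 2$ from the cases $X = \emptyset$ and $|X(\tuple v)| = 1$. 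The verification of this equivalence is a routine unfolding via Proposition \ref{propo:restrict} followed by a case split on the size of $X(\tuple v)$.

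Once these two definitions are in place, the corollary will follow by substituting each $\nc$- or $\nonempty$-atom in any $\FO(\mathcal D, \nc, \nonempty)$-sentence with its $\FO(\All)$ translation to obtain an equivalent $\FO(\mathcal D, \All)$-sentence, and then invoking Theorem \ref{thm:nonemptysafe} to reduce it further to an $\FO(\mathcal D)$-sentence. The main obstacle is guessing the right $\FO(\All)$-expression for $\nc$, and in particular ensuring that the boundary case $X = \emptyset$ is handled correctly: though $\forall \tuple b$ might naively suggest vacuous truth on the empty team, $\All(\tuple b)$ applied to an empty subteam correctly fails (its $\tuple b$-projection is not $M^{|\tuple v|}$), matching the falsity of $\nc(\tuple v)$ on $\emptyset$.
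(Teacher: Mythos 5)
Your proof is correct and follows essentially the same route as the paper: both define $\nc(\tuple v)$ by the identical formula $\forall \tuple w\,((\tuple w \neq \tuple v) \hookrightarrow \All(\tuple w))$ and reduce nonemptiness to a single totality atom over a fresh quantified variable (the paper uses $\forall w\, \All(w)$ where you use $\exists y\, \All(y)$; both work), then appeal to the safety of definable dependencies together with Theorem \ref{thm:nonemptysafe}. The only cosmetic divergence is that the paper conjoins $\tuple v = \tuple v$ to its nonemptiness translation so that the free variables match the arity of the atom, a point your version leaves to Locality.
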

\begin{proof}
	Observe that $\nc(\tuple v) \equiv \forall \tuple w (\tuple w \not = \tuple v \hookrightarrow \All(\tuple w))$ and that $\nonempty(\tuple v) \equiv (\tuple v = \tuple v) \wedge \forall w \All(w)$.\footnote{The $\tuple v = \tuple v$ condition is only to make it so that the two expressions have the same free variables. The choice of $\tuple v$ has no other effect on the satisfaction conditions of $\nonempty(\tuple v)$, and one could instead treat $\nonempty := \forall w \All w$ as a ``$0$-ary'' dependency.}	
\end{proof}

Furthermore, additional operators can be shown to be definable in terms of totality (and, hence, not to add to the expressive power of any logic $\FO(\mathcal D)$. For instance, consider the following connective: 
\begin{defin}[Possibility Operator]
	For any family of dependencies $\mathcal D$, let $\FO(\mathcal D, \diamond)$ be the logic obtained by adding to the language of $\FO(\mathcal D)$ a new unary operator $\diamond$ such that, for all models $\M$, teams $X$ and formulas $\psi$ with free variables in $\dom(X)$, 
	\begin{description}
		\item[TS-$\diamond$:] $\M \models_X \diamond \psi$ if and only if there exists some $Y \subseteq X$, $Y \not = \emptyset$, such that $\M \models_Y \psi$. 
	\end{description}
\end{defin}
\begin{coro}
	For all families of dependencies $\mathcal D$, every sentence of $\FO(\mathcal D, \diamond)$ is equivalent to some sentence of $\FO(\mathcal D)$. 
\end{coro}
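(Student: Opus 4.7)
The plan is to reduce this corollary to Corollary~\ref{coro:ncne}, which states that the nonemptiness atoms are safe for every family of dependencies. The key observation is that the possibility operator $\diamond$ is definable in terms of $\nonempty$ via the equivalence
\[
    \diamond \psi ~\equiv~ (\psi \wedge \nonempty) \vee \forall w (w = w),
\]
holding for every formula $\psi$, where $\nonempty$ is understood as the closed, $0$-ary atom $\forall w \All(w)$ from the footnote of Corollary~\ref{coro:ncne}, and $\forall w (w = w)$ is a closed first-order tautology true in every team. Observe that the free variables of both sides coincide with $\fv(\psi)$.

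To verify this equivalence, suppose first that $\M \models_X \diamond \psi$, witnessed by some nonempty $Y \subseteq X$ with $\M \models_Y \psi$; then the decomposition $X = Y \cup X$ shows the right-hand side, since $Y$ is nonempty (so $Y \models \nonempty$), $Y \models \psi$, and $X \models \forall w (w = w)$ trivially. Conversely, from any split $X = Y \cup Z$ satisfying $\M \models_Y \psi \wedge \nonempty$, the $\nonempty$ conjunct forces $Y \neq \emptyset$, so $Y$ is a nonempty subteam of $X$ on which $\psi$ holds, yielding $\M \models_X \diamond \psi$. The empty team case is consistent: both sides fail on $X = \emptyset$, since the only possible split is $\emptyset = \emptyset \cup \emptyset$ and $\emptyset \not\models \nonempty$.

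With the equivalence in hand, I would define a translation $\cdot^*: \FO(\mathcal D, \diamond) \to \FO(\mathcal D, \nonempty)$ by structural induction: the identity on first-order literals and on dependency atoms of $\mathcal D$, commuting with $\wedge$, $\vee$, $\exists v$, $\forall v$, and sending $(\diamond \psi)^*$ to $(\psi^* \wedge \nonempty) \vee \forall w (w = w)$. By compositionality of Team Semantics, together with the pointwise equivalence just established, one obtains $\phi \equiv \phi^*$ for every formula $\phi$. In particular, any sentence of $\FO(\mathcal D, \diamond)$ is logically equivalent to a sentence of $\FO(\mathcal D, \nonempty)$, which by Corollary~\ref{coro:ncne} is in turn equivalent to some sentence of $\FO(\mathcal D)$, as required.

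I do not expect any substantive obstacle here; the sole point of care is to use the variable-free versions of $\nonempty$ and of the tautology $\forall w (w = w)$ so that the translation preserves the set of free variables cleanly, plus the separate check on the empty team that the above equivalence also holds there.
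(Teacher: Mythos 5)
Your proof is correct and follows essentially the same route as the paper, which likewise observes that $\diamond \psi \equiv (\nonempty \wedge \psi) \vee \top$ and then appeals to the safety of the nonemptiness atom (via totality) to eliminate it. Your additional verification of the equivalence, including the empty-team case, is accurate but not a departure from the paper's argument.
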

\begin{proof}
	Observe that $\diamond \psi$ is logically equivalent to $(\nonempty \wedge \psi) \vee \top$. Therefore, every sentence of $\FO(\mathcal D, \diamond)$ is equivalent to some sentence of $\FO(\mathcal D, \nonempty)$; but by Corollary \ref{coro:ncne} this is equivalent to some sentence in $\FO(\mathcal D, \All)$, and by Theorem \ref{thm:nonemptysafe}, every such sentence is equivalent to some sentence of $\FO(\mathcal D)$ as required.
\end{proof}

Results like these ones contribute to the study of Team Semantics not only in the sense that they provide information regarding e.g. the properties of totality, inconstancy and nonemptiness atoms or possibility operators in this context, but also and more importantly because they allow us to use such atoms and operators freely as \emph{tools} for investigating the expressive power of \emph{any other logic} $\FO(\mathcal D)$. For example: 
\begin{coro}
	Let $\subseteq_k$ represent the collection of all $k$-ary inclusion atoms $\tuple x \subseteq \tuple y := [R:\tuple x \tuple y] \forall \tuple u \tuple v (R \tuple u \tuple v \rightarrow \exists \tuple w R \tuple w \tuple u)$ for $|\tuple x| = |\tuple y| = k$, and let $|_k$ represent the $k$-ary exclusion atoms $\tuple x | \tuple y := [R: \tuple x \tuple y] \forall \tuple u \tuple v \tuple u' \tuple v' ((R \tuple u \tuple v  \wedge R \tuple u'\tuple v') \rightarrow (\tuple u \not = \tuple v' \wedge \tuple v \not = \tuple u'))$ (also with $|\tuple x| = |\tuple y| = k$). Then every sentence of $\FO(\subseteq_k)$ is equivalent to some sentence of $\FO(|_k)$. 
\end{coro}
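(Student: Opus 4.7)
The plan is to show that every $k$-ary inclusion atom is equivalent to some formula of $\FO(|_k, \diamond)$, and then to invoke the safety of $\diamond$ (the preceding corollary on the possibility operator) to conclude that every sentence of $\FO(|_k, \diamond)$---and in particular every sentence of $\FO(\subseteq_k)$---is equivalent to some sentence of $\FO(|_k)$. Concretely, given $\phi \in \FO(\subseteq_k)$, I would first apply Theorem~\ref{thm:normalform} to bring $\phi$ into prenex form in which the inclusion atoms appear only as guarded conjuncts $\theta \hookrightarrow (\tuple t \subseteq \tuple u)$; then I would replace each such occurrence of $\tuple t \subseteq \tuple u$ by its $\FO(|_k, \diamond)$-translation; and finally, the preceding safety result for $\diamond$ yields an equivalent sentence of $\FO(|_k)$.

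The conceptual guide for the translation is the contrapositive characterization: $\tuple t \subseteq \tuple u$ holds in $X$ iff every tuple $\tuple a \in M^k$ not realized by $\tuple u$ in $X$ is also not realized by $\tuple t$. Since ``$\tuple a$ not realized by $\tuple t$'' is exactly what a $k$-ary exclusion atom $\tuple z \,|\, \tuple t$ expresses (with $\tuple z$ suitably quantified), one is led to look for an encoding of the shape
\[
  \forall \tuple z \bigl((\tuple z \,|\, \tuple t) \vee \Phi(\tuple z, \tuple u)\bigr),
\]
where $\Phi$ is built from exclusion atoms and the possibility operator and asserts that the values of $\tuple z$ placed in the right-hand component of the split are in fact realized by $\tuple u$ somewhere in the team. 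Analogous arity-preserving translations of inclusion atoms via dependence or exclusion atoms are known in the literature on dependence logic and can serve as a template.

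The main obstacle is finding the correct $\Phi$. The naive candidate $\Phi \equiv \diamond(\tuple z = \tuple u)$ turns out to be too weak: because $\diamond$ only commits to a single nonempty witnessing subteam, the disjunction can be satisfied by one ``lucky'' assignment with $s(\tuple z) = s(\tuple u)$ rather than by a witness for every value of $\tuple z$ lying in the right-hand component. Repairing this is the technical heart of the proof and reflects the intrinsic tension between the downwards-closed nature of exclusion and the union-closed nature of inclusion. The successful encoding will most plausibly nest $\diamond$ inside an additional universal quantifier that isolates individual values of the inclusion witness, or alternatively exploit the safe operator $\All$ on an auxiliary variable to enforce a covering condition matching $X(\tuple t) \setminus X(\tuple u)$ against the empty set. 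Once the translation is verified, the reduction to $\FO(|_k)$ follows at once from the safety of $\diamond$ together with the normal form of Theorem~\ref{thm:normalform}.
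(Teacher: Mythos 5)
Your overall strategy --- express the $k$-ary inclusion atom in terms of $k$-ary exclusion atoms together with an atom or operator already known to be safe, and then discharge that extra ingredient by the corresponding safety result --- is exactly the strategy of the paper. But your write-up has a genuine gap at precisely the point you yourself flag as ``the technical heart'': you never actually produce and verify the translation. You correctly observe that the naive candidate $\forall \tuple z\bigl((\tuple z \,|\, \tuple t) \vee \diamond(\tuple z = \tuple u)\bigr)$ is too weak, and you then offer two possible repair directions without committing to or checking either one. A proof that says the successful encoding ``will most plausibly'' take a certain shape is not a proof: the entire content of this corollary is the existence of a concrete, arity-preserving definition of $\tuple x \subseteq \tuple y$ in $\FO(|_k)$ plus something safe, and that definition is exactly what is missing.

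For the record, the paper realizes the second of your two suggested repairs: it takes
\[
\tuple x \subseteq \tuple y \;\equiv\; \exists \tuple z\, \tuple w\,\bigl(\tuple x \,|\, \tuple z \;\wedge\; (\tuple w = \tuple y \vee \tuple w = \tuple z) \;\wedge\; \All(\tuple w)\bigr),
\]
so that $\tuple w$ must realize every $k$-tuple of the model while only being allowed to copy a value of $\tuple y$ or a value of $\tuple z$, the latter being excluded from the values of $\tuple x$; hence $M^k \subseteq X(\tuple y) \cup \overline{X(\tuple x)}$, i.e.\ $X(\tuple x) \subseteq X(\tuple y)$. This routes the reduction through $\All$ and Theorem~\ref{thm:nonemptysafe} rather than through $\diamond$; your $\diamond$-based detour is not wrong in principle (the safety of $\diamond$ is itself derived from that of $\All$), but it adds a layer without solving the actual problem. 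Also, the preliminary massaging via Theorem~\ref{thm:normalform} that you propose is unnecessary: once each inclusion atom is replaced in situ by its $\FO(|_k, \All)$ definition, the safety of totality applies to the resulting sentence directly.
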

\begin{proof}
	Observe that $\tuple x \subseteq \tuple y$ is logically equivalent to $\exists \tuple z \tuple w (\tuple x | \tuple z \wedge (\tuple w = \tuple y \vee \tuple w = \tuple z) \wedge \All(\tuple w)))$.\footnote{This can be verified by expanding its satisfaction conditions. The intuition behind the above expression is the following: $\tuple w$ must take all possible values, but can take only values which are in $\tuple y$ or are not in $\tuple x$. So if $X$ satisfies the formula then $\overline{X(\tuple x)} \cup X(\tuple y) = M^k$, that is $X(\tuple x) \subseteq X(\tuple y)$.} Thus every sentence of $\FO(\subseteq_k)$ is equivalent to some sentence of $\FO(|_k, \All)$, which -- by the safety of totality -- is equivalent to some sentence of $\FO(|_k)$. 
\end{proof}
This fact could have also been extracted from a careful analysis of known -- and delicate -- equivalences between these logics and fragments of $\Sigma_1^1$.\footnote{More specifically, it is known from \cite{galliani13b} that $\FO(\subseteq_k) \leq \ESO_f(k\text{-ary})$; as it is shown in the arXiv version of \cite{galliani12}, $\FO(|_k)$ is contained in $\FO(=\!\!(\cdots; \cdot)_k)$, where $=\!\!(\cdots; \cdot)_k$ represents $k$-ary \emph{functional dependencies} $=\!\!(\tuple x; y) := [R:\tuple x y] \forall \tuple u v v' (R \tuple u v \wedge R \tuple u v' \rightarrow v =v')$, where $|\tuple x| = k$ -- more specifically, $\tuple x | \tuple y$ is equivalent to $\forall \tuple z \exists p q ( =\!\!(\tuple z; p) \wedge =\!\!(\tuple z; q) \wedge (p=q \hookrightarrow \tuple z \not = \tuple x) \wedge (p \not = q \hookrightarrow \tuple z \not = \tuple y)$; and it is known from \cite{durand2012hierarchies} that $\FO(=\!\!(\cdots; \cdot)_k) = \ESO_f(k\text{-ary})$.} However, the advantage of this approach is that we could obtain our result \emph{directly}, without having to rely on characterizations of these fragments in terms of $\Sigma_1^1$ (which were available for these specific, well-studied logics, but may not be so for other $\FO(\mathcal D)$.).
\section{The Unsafety of Constancy}
As we saw in the previous section, three typical strongly first order dependencies -- that is, totality, nonconstancy and nonemptiness -- are safe for all families of dependencies. A reasonable hypothesis to make at this point would be that the same is true of all strongly first order dependencies. This is not however the case, as constancy atoms are strongly first order but are not safe for all families of dependencies. Indeed, as we will see, graph non-connectedness is definable in terms of constancy and unary inclusion atoms, but not in terms of unary inclusion atoms alone. In keeping with the existing literature on the subject, we will use $=\!\!(x)$ for the atom expressing that $x$ takes a constant value in the team (that is, for $[U: x] \forall v w (U v \wedge U w \rightarrow v = w)$) and $x \subseteq y$ for the atom expressing that all possible values of $x$ are also possible values for $y$ (that is, $[U: x][V:y] \forall v (Uv \rightarrow Vv)$, or equivalently $[R:xy]\forall uv (Ruv \rightarrow \exists w R wu)$). We will use the symbols $=\!\!(\cdot)$ and $\subseteq_1$ for representing these two types of dependencies. Then it is straightforward to see that (as mentioned already in \cite{galliani12}) non-connectedness is definable in $\FO(=\!\!(\cdot), \subseteq_1)$:
\begin{propo}
	The $\FO(=\!\!(\cdot), \subseteq_1)$ sentence $\exists x y (=\!\!(y) \wedge \forall z (E x z \hookrightarrow z \subseteq x) \wedge x \not = y)$ is true in a model $\mathfrak G = (G, E)$ if and only if it is not connected.
	\label{propo:nonconn}
\end{propo}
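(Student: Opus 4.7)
The plan is to unpack the team-semantic truth conditions of the displayed sentence directly and match them to the assertion that $G$ has a nonempty proper subset closed under $E$, which for undirected graphs coincides with non-connectedness.

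First I would analyse the team produced by $\exists x y$ applied to the initial team $\{\epsilon\}$. The rule \textbf{TS-$\exists$} supplies an arbitrary nonempty team $X$ with $\dom(X) = \{x,y\}$. The conjunct $=\!\!(y)$ forces $y$ to take a single value $b$ throughout $X$, and the first-order literal $x \neq y$, evaluated pointwise on $X$ by Proposition~\ref{propo:fo}, enforces $s(x) \neq b$ for every $s \in X$. Writing $A := X(x)$, the combined effect is that the existential choice amounts to selecting a nonempty set $A \subseteq G$ together with a vertex $b \in G \setminus A$; in particular $A$ is a nonempty proper subset of $G$.

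Next I would unpack $\forall z (Exz \hookrightarrow z \subseteq x)$ on $X$. By the rule \textbf{TS-$\forall$} together with Proposition~\ref{propo:restrict}, satisfaction of this formula in $X$ is equivalent to $z \subseteq x$ being satisfied by the team
\[
Y = \{s[a/z] : s \in X,\ a \in G,\ (s(x), a) \in E\}.
\]
The unary inclusion atom demands $Y(z) \subseteq Y(x)$: the set of vertices adjacent to some element of $A$ must be contained in the set of elements of $A$ that possess at least one neighbour. Using symmetry of $E$, any neighbour $w$ of some $v \in A$ itself has a neighbour (namely $v$), so this inclusion collapses precisely to the statement that $A$ is closed under $E$ (isolated vertices of $A$, if any, contribute nothing to either side).

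Combining these observations, the sentence is true in $\mathfrak G$ iff there exists a nonempty proper $A \subsetneq G$ closed under $E$, and such an $A$ exists iff $G$ has at least two connected components. For the converse direction of the biconditional I would, given a non-connected $\mathfrak G$, take $A$ to be the vertex set of one connected component and $b$ a vertex of another, then exhibit the witnessing team $X = \{s_v : v \in A\}$ with $s_v(x) = v$ and $s_v(y) = b$, and verify the three conjuncts. The only delicate step is the reduction of $Y(z) \subseteq Y(x)$ to closure of $A$ under $E$: symmetry of $E$ is needed to discard the extra proviso that every neighbour of $A$ itself has a neighbour, which in the directed setting would express a slightly different property than non-connectedness.
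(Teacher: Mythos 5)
Your proof is correct: the paper states this proposition without proof (declaring it ``straightforward to see''), and your unpacking of the team semantics --- the existentials yield an arbitrary nonempty team, $=\!\!(y)$ and $x \neq y$ reduce it to $A \times \{b\}$ with $b \notin A$, and the conditioned inclusion atom says exactly that $A$ is closed under $E$ --- is precisely the intended argument. You also correctly isolate the one genuinely delicate point, namely that symmetry of $E$ is needed to discharge the extra requirement that every member of $Y(x)$ has a neighbour, so that ``nonempty proper $E$-closed subset exists'' coincides with non-connectedness for undirected graphs.
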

However, as we will now show, unary inclusion atoms alone do not suffice to define non-connectedness. In particular, for any $n \in \mathbb N$, let the graphs $\mathfrak A_n$ and $\mathfrak B_n$ be constituted respectively by two cycles of length $2^{n+1}$ and by a single cycle of length $2^{n+2}$, as shown in Figure \ref{fig:AnBn}. 
\begin{figure}
\begin{center}
\includegraphics{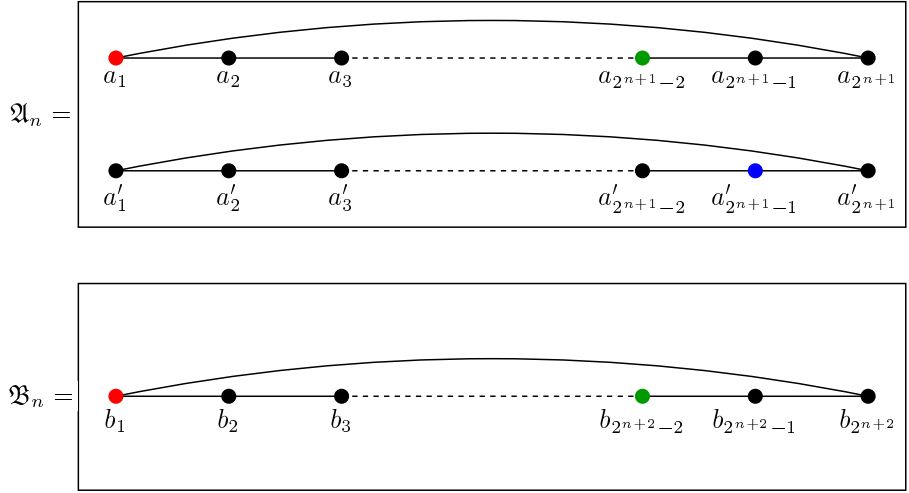}
\end{center}
\caption{The undirected graphs $\mathfrak A_n$ and $\mathfrak B_n$. There is no unary inclusion logic sentences which is true for all $\mathfrak A_n$ and is false for all $\mathfrak B_n$, and therefore non-connectedness is not definable in unary inclusion logic. Note that there exist automorphisms sending any element (red) to any other element of the model, no matter if in the same connected component (green) or in different components (blue).}
\label{fig:AnBn}
\end{figure}

Then, as we will now see, it is not possible to find a $\FO(\subseteq_1)$ sentence that is true in all $\mathfrak A_n$ and false in all $\mathfrak B_n$. This can be proved by means of an Ehrenfeucht-Fra\"iss\'e game defined along the lines of the one for Dependence Logic of \cite{vaananen07}
; but in what follows, a different -- and simpler -- proof will be shown. 

%
%
\begin{lemma}[Automorphisms in $A_n$ and $B_n$]
	Let $\mathfrak G = (G, E)$ be an undirected graph of the form $\mathfrak A_n$ or of the form $\mathfrak B_n$ for some $n \in \mathbb N$, and let $p,q \in G$ be two nodes of this graph. Then there exists an automorphism $\mathfrak f: G \rightarrow G$ of $\mathfrak G$ such that $\mathfrak f(p) = q$. 
\end{lemma}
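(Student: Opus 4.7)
The plan is to exhibit, in each of the two cases, enough rotation and swap automorphisms to move any vertex to any other.

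First I would handle $\mathfrak B_n$. Identify its vertex set with the cyclic group $\mathbb Z / 2^{n+2} \mathbb Z$, with edges between $i$ and $i+1 \pmod{2^{n+2}}$. Then for any two vertices $p, q$, the rotation $\mathfrak f(i) = i + (q - p) \pmod{2^{n+2}}$ is clearly an edge-preserving bijection and sends $p$ to $q$.

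Next I would handle $\mathfrak A_n$. Identify its vertex set with $\{0,1\} \times \mathbb Z / 2^{n+1} \mathbb Z$, where the edge relation connects $(c, i)$ and $(c, i+1 \pmod{2^{n+1}})$ (so the two connected components correspond to $c = 0$ and $c = 1$). Given $p = (c_p, i_p)$ and $q = (c_q, i_q)$, there are two subcases. If $c_p = c_q$, let $\mathfrak f(c, i) = (c, i + (i_q - i_p) \pmod{2^{n+1}})$, which rotates each component by the same amount and is easily checked to be an automorphism sending $p$ to $q$. If $c_p \neq c_q$, let $\mathfrak f(c, i) = (1 - c, i + (i_q - i_p) \pmod{2^{n+1}})$, which swaps the two components while simultaneously rotating; again this preserves edges (since rotation and the component swap are both automorphisms and automorphisms compose) and sends $p$ to $q$.

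In both subcases the verification that $\mathfrak f$ preserves $E$ reduces to the fact that adding a constant modulo $2^{n+1}$ (or $2^{n+2}$) preserves the ``successor modulo $N$'' relation, and that the component swap preserves the edge relation because the two components are isomorphic cycles of the same length. Since all the maps are manifestly bijective, they are automorphisms.

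I do not expect any significant obstacle: the lemma is just the observation that a cycle is vertex-transitive and that the disjoint union of two isomorphic cycles remains vertex-transitive because the component swap is available. The only thing to be slightly careful about is, in the $\mathfrak A_n$ case with $c_p \neq c_q$, to apply the rotation \emph{after} (or consistently with) the swap so that $\mathfrak f(p) = q$ exactly; the formula above takes care of this.
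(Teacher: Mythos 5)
Your proof is correct, and it is essentially the argument the paper intends (the paper states this lemma without proof, merely illustrating in Figure~\ref{fig:AnBn} that rotations move a vertex within its cycle and a component swap moves it across components). The explicit formulas for the rotation of $\mathbb Z/2^{n+2}\mathbb Z$ and for the simultaneous swap-and-rotate on $\{0,1\}\times\mathbb Z/2^{n+1}\mathbb Z$ check out, including the verification that $\mathfrak f(p)=q$ in the cross-component case.
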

\begin{defin}[Flattening]
	Let $\phi \in \FO(\subseteq_1)$. Then its \emph{flattening} $\phi^f$ is the first order expression obtained by replacing each inclusion atom $x \subseteq y$ of $\phi$ with the always-true atom $\top$. 
\end{defin}

\begin{lemma}
	For all models $\M$, teams $X$, and formulas $\phi \in \FO(\subseteq_1)$, if $\M \models_X \phi$ then $\mathfrak G \models_X \phi^f$. 
	\label{lemma:flat}
\end{lemma}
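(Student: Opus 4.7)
The approach is a routine structural induction on $\phi \in \FO(\subseteq_1)$, showing that whenever $\phi$ is satisfied by a team $X$, its flattening $\phi^f$ is satisfied by the same $X$ in the graph $\mathfrak G$ named in the conclusion. The underlying reason the claim holds is that flattening replaces each unary inclusion atom $x \subseteq y$ with the vacuous $\top$, so $\phi^f$ is weaker than $\phi$ clause-by-clause; satisfaction can only propagate forward through the weakening.

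For the base case, any first-order literal is fixed by flattening, so the desired conclusion matches the hypothesis verbatim. A unary inclusion atom $x \subseteq y$ flattens to $\top$, which every team trivially satisfies in every model, so the conclusion $\mathfrak G \models_X \phi^f$ holds unconditionally and the hypothesis is not even used.

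For the inductive step, I walk through each team-semantic rule, observing that the witnesses for $\phi$ provided by the hypothesis serve directly as witnesses for $\phi^f$. If $\phi = \psi_1 \wedge \psi_2$, the induction hypothesis applied to each conjunct on $X$ yields $\mathfrak G \models_X \psi_1^f$ and $\mathfrak G \models_X \psi_2^f$, hence $\mathfrak G \models_X \phi^f$. If $\phi = \psi_1 \vee \psi_2$, the subteam splitting $X = Y \cup Z$ with $\psi_1$ holding on $Y$ and $\psi_2$ on $Z$ induces, via two applications of the induction hypothesis, $\mathfrak G \models_Y \psi_1^f$ and $\mathfrak G \models_Z \psi_2^f$, so the same splitting witnesses $\mathfrak G \models_X \phi^f$. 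The existential case $\phi = \exists v\,\psi$ is handled by carrying over the supplementation function $H$ supplied by the hypothesis and invoking the induction hypothesis on $X[H/v]$; the universal case uses the canonical duplication $X[M/v]$ and closes by induction. Commutation of flattening with every connective (e.g.\ $(\psi_1 \vee \psi_2)^f = \psi_1^f \vee \psi_2^f$, $(\exists v\,\psi)^f = \exists v\,\psi^f$) is immediate from the definition, so no rearrangement is needed.

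The principal ``obstacle,'' such as it is, is purely notational: the team $X$ appears over $\M$ on the left-hand side and over $\mathfrak G$ on the right, and for the inductive witnesses (subteams, supplementations, duplications) to type-check simultaneously on both sides of the implication, the two must denote the same structure, so the statement is naturally read with $\mathfrak G$ interpreting the model throughout. Once that is fixed, the argument is a textbook flattening induction with no combinatorial content to speak of.
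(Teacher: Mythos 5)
Your proof is correct: the routine structural induction (literals unchanged, inclusion atoms flatten to the trivially satisfied $\top$, witnesses for disjunction, existential and universal quantification carried over verbatim) is exactly the intended argument, and the paper itself omits the proof as immediate. You are also right that the $\mathfrak G$ in the conclusion is a typographical slip for $\M$; the statement is meant to refer to a single model throughout.
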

\begin{defin}[Team Closure]
Let $X$ be a team over $\mathfrak M$, domain $v_1 \ldots v_n$.  Then $\texttt{Cl}(X)  = \{\mathfrak f(s) : s \in X, \mathfrak f: M \rightarrow M \text{ automorphism}\}$ is the set of all assignments obtained by applying all automorphisms of $\M$ to all assignments of $X$.
\end{defin}
\begin{lemma}
	For all models $\M$ and teams $X$ over $\M$, $\texttt{Cl}(\texttt{Cl}(X)) = \texttt{Cl}(X)$. Furthermore, for all teams $Y$ and $Z$, $\texttt{Cl}(Y \cup Z) = \texttt{Cl}(Y) \cup \texttt{Cl}(Z)$.
	\label{lemma:itercl}
\end{lemma}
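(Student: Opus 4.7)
The plan is to prove both equalities by straightforward double inclusion, using only that the identity map is an automorphism and that the composition of two automorphisms is again an automorphism.

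For the first claim $\texttt{Cl}(\texttt{Cl}(X)) = \texttt{Cl}(X)$, the inclusion $\texttt{Cl}(X) \subseteq \texttt{Cl}(\texttt{Cl}(X))$ follows by monotonicity: since the identity on $M$ is an automorphism, every $s \in Y$ lies in $\texttt{Cl}(Y)$ for any team $Y$, so $\texttt{Cl}(X) \subseteq \texttt{Cl}(\texttt{Cl}(X))$ is immediate. For the reverse inclusion, I take an arbitrary $s \in \texttt{Cl}(\texttt{Cl}(X))$, unpack the definition to get $s = \mathfrak{f}(s')$ with $\mathfrak{f}$ an automorphism and $s' \in \texttt{Cl}(X)$, then unpack again to get $s' = \mathfrak{g}(s'')$ with $\mathfrak{g}$ an automorphism and $s'' \in X$. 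Observing that $\mathfrak{f} \circ \mathfrak{g}$ is an automorphism of $\M$ and that $\mathfrak{f}(\mathfrak{g}(s''))$ (understood pointwise on the variables in the domain) equals $(\mathfrak{f} \circ \mathfrak{g})(s'')$ concludes $s \in \texttt{Cl}(X)$.

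For the second claim, the inclusion $\texttt{Cl}(Y) \cup \texttt{Cl}(Z) \subseteq \texttt{Cl}(Y \cup Z)$ is again monotonicity, since any automorphism witnessing membership in $\texttt{Cl}(Y)$ or $\texttt{Cl}(Z)$ witnesses membership in $\texttt{Cl}(Y \cup Z)$. For the reverse inclusion, I take $s \in \texttt{Cl}(Y \cup Z)$, write $s = \mathfrak{f}(s')$ for some automorphism $\mathfrak{f}$ and $s' \in Y \cup Z$, and split cases on whether $s' \in Y$ or $s' \in Z$, placing $s$ in $\texttt{Cl}(Y)$ or $\texttt{Cl}(Z)$ accordingly.

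There is no real obstacle here, since everything reduces to the fact that the automorphism group of $\M$ is closed under composition and contains the identity, plus a routine case analysis on membership in the union. The only small care I would take is to make explicit that the pointwise-on-variables convention for $\mathfrak{f}(s)$ introduced earlier in the preliminaries is compatible with composition, i.e.\ $(\mathfrak{f} \circ \mathfrak{g})(s) = \mathfrak{f}(\mathfrak{g}(s))$ for every assignment $s$, which is immediate from the definition $\mathfrak{f}(s)(v) = \mathfrak{f}(s(v))$.
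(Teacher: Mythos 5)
Your proof is correct: the double-inclusion argument via the identity automorphism and closure of automorphisms under composition (together with the pointwise compatibility $(\mathfrak f \circ \mathfrak g)(s) = \mathfrak f(\mathfrak g(s))$) is exactly the routine verification intended here. The paper states this lemma without proof, treating it as immediate, and your argument is the standard one that fills in that omission.
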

\begin{lemma}
	For all models $\M$, all teams $X$ and all first order formulas $\phi$ with free variables in the domain of $X$, 
		$\M \models_X \phi \Leftrightarrow \M \models_{\texttt{Cl}(X)} \phi$.
	\label{lemma:focl}
\end{lemma}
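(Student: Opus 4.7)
The plan is to reduce the claim to the well-known fact that automorphisms preserve first order satisfaction in Tarskian semantics, via Proposition \ref{propo:fo}.

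First I would unpack what $\M \models_X \phi$ means for a first order $\phi$. By Proposition \ref{propo:fo}, $\M \models_X \phi$ is equivalent to: for every $s \in X$, $\M \models_s \phi$ in the usual Tarskian sense. Similarly $\M \models_{\texttt{Cl}(X)} \phi$ is equivalent to: for every $t \in \texttt{Cl}(X)$, $\M \models_t \phi$ in the Tarskian sense. So the lemma reduces to showing that the Tarskian-truth sets of $\phi$ over $X$ and over $\texttt{Cl}(X)$ coincide as all-quantifications.

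For the direction $(\Leftarrow)$, I would simply note that the identity map on $M$ is an automorphism of $\M$, so $X \subseteq \texttt{Cl}(X)$: every $s \in X$ equals $\mathrm{id}(s)$. Hence if $\phi$ holds at every assignment of $\texttt{Cl}(X)$, it holds at every assignment of $X$.

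For the direction $(\Rightarrow)$, I would take an arbitrary $t \in \texttt{Cl}(X)$, so $t = \mathfrak f(s)$ for some $s \in X$ and some automorphism $\mathfrak f : M \to M$ of $\M$. By the classical fact that automorphisms preserve satisfaction of first order formulas (proved by straightforward induction on $\phi$: atomic formulas are preserved by definition of automorphism, and the inductive steps for connectives and quantifiers are routine), $\M \models_s \phi$ iff $\M \models_{\mathfrak f(s)} \phi$. Since $\M \models_X \phi$ gives $\M \models_s \phi$, we obtain $\M \models_t \phi$, as required. There is no genuine obstacle here; the only thing worth stating explicitly is the auxiliary preservation-by-automorphism fact, which is entirely standard and does not depend on Team Semantics.
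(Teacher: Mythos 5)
Your proof is correct and is exactly the routine argument the paper has in mind (the lemma is stated there without proof as an immediate consequence of Proposition \ref{propo:fo} together with the standard fact that automorphisms preserve Tarskian satisfaction). Both directions are handled properly: the identity automorphism gives $X \subseteq \texttt{Cl}(X)$ for one direction, and preservation of first order satisfaction under $s \mapsto \mathfrak f(s)$ gives the other.
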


The next lemma is less obvious, and shows that over models such as the $\mathfrak A_n$ and $\mathfrak B_n$ and for teams closed under automorphisms $\FO(\subseteq_1)$ is no more expressive than first order logic:
\begin{lemma}
	Let $\M$ be a model such that for any two points $m_1, m_2 \in M$ there exists an automorphism $\mathfrak f: M \rightarrow M$ of $\M$ such that $\mathfrak f(m_1) = m_2$. 

	Then for all teams $X$ over $\M$ such that $X = \texttt{Cl}(X)$ and all formulas $\phi \in \FO(\subseteq_1)$ with free variables in $\dom(X)$ we have that
		$\M \models_X \phi \Leftrightarrow \M \models_X \phi^f$.
	\label{lemma:clinc}
\end{lemma}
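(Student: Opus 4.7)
The plan is to prove the two directions separately. The forward direction $\M \models_X \phi \Rightarrow \M \models_X \phi^f$ is immediate from Lemma \ref{lemma:flat}, which holds for all models and all teams. The work lies entirely in the converse, which I would prove by structural induction on $\phi$ (assumed in negation normal form) under the standing hypothesis that $X$ is automorphism-closed.

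For first order literals the flattening is the identity, so there is nothing to show. For inclusion atoms $x \subseteq y$ we have $\phi^f = \top$, so we must show $X(x) \subseteq X(y)$ unconditionally. If $X = \emptyset$ both sides are empty. Otherwise $X(y)$ is nonempty and, because $X = \texttt{Cl}(X)$, closed under the action of the automorphism group of $\M$ on $M$; by the transitive-automorphism hypothesis this forces $X(y) = M$, and the inclusion is trivial. The conjunction case is immediate from the induction hypothesis. For disjunctions, since $\psi_1^f \vee \psi_2^f$ is first order, Proposition \ref{propo:fo} lets me partition $X = Y \cup Z$ with $Y = \{s \in X : \M \models_s \psi_1^f\}$ and $Z = \{s \in X : \M \models_s \psi_2^f\}$; isomorphism invariance of first order Tarskian satisfaction shows $Y$ and $Z$ are themselves automorphism-closed, so the induction hypothesis applies on each side.

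The universal case is easy: if $\M \models_X \forall v \psi^f$ then $\M \models_{X[M/v]} \psi^f$, and $X[M/v]$ is automorphism-closed because any automorphism $\mathfrak f$ sends $s[m/v]$ to $\mathfrak f(s)[\mathfrak f(m)/v] \in X[M/v]$. The existential case is the one I expect to be the main obstacle. Given $H$ with $\M \models_{X[H/v]} \psi^f$, I would consider $X'' := \texttt{Cl}(X[H/v])$. By Lemma \ref{lemma:focl} for first order formulas, $\M \models_{X''} \psi^f$, and $X''$ is closed by Lemma \ref{lemma:itercl}. I then verify that $X''$ has the form $X[H'/v]$ for the supplementation
\[
H'(s') = \{\mathfrak f(m) : s \in X,\ m \in H(s),\ \mathfrak f \text{ an automorphism with } \mathfrak f(s) = s'\},
\]
which is nonempty for every $s' \in X$ (take the identity and any $m \in H(s')$, using $s' \in X = \texttt{Cl}(X)$). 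Applying the induction hypothesis to the smaller formula $\psi$ on the closed team $X[H'/v]$ yields $\M \models_{X[H'/v]} \psi$, hence $\M \models_X \exists v \psi$.

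The subtle point throughout is that the induction must carry the closure hypothesis down the formula tree; the cases where this is not automatic are disjunction (handled by the first order structure of the subformulas after flattening) and existential quantification (handled by replacing $H$ with its automorphism-saturated version, which is legal because Lemma \ref{lemma:focl} preserves satisfaction of the flattened body).
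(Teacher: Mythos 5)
Your proof is correct and follows the same structural-induction strategy as the paper, which details only the inclusion-atom case and asserts that the remaining cases ``present no particular difficulties.'' Your write-up supplies precisely the two cases where the closure hypothesis genuinely has to be re-established --- disjunction (via Proposition \ref{propo:fo} and automorphism-invariance of Tarskian satisfaction) and existential quantification (via saturating the supplementation function and invoking Lemmas \ref{lemma:focl} and \ref{lemma:itercl}) --- and these details check out, as does your slightly different but equivalent handling of the inclusion atom (showing $X(y)=M$ rather than exhibiting a witness per assignment).
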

\begin{proof}
	The left to right direction is already taken care of by Lemma \ref{lemma:flat}. The right to left direction is proved via structural induction and presents no particular difficulties. We show in detail the case of inclusion atoms, which is helpful for understanding why $\FO(\subseteq_1)$ is no more expressive than $\FO$ over these types of models. 
	
	As $(v_1 \subseteq v_2)^f = \top$, we need to prove that $\M \models_X v_1 \subseteq v_2$ whenever $X$ is a team whose domain contains the variables $v_1$ and $v_2$ and $X = \texttt{Cl}(X)$. But this is the case. Indeed, suppose that $s(v_1) = m_1$ and $s(v_2) = m_2$. Then by assumption, there is an automorphism $\mathfrak f$ of $\M$ such that $\mathfrak f(m_2) = m_1$, and since $X = \texttt{Cl}(X)$ there exists some assignment $s' \in X$ such that $s'(v) = \mathfrak f(s(v))$ for all $v \in \dom(s)$. This implies in particular that $s'(v_2) = \mathfrak f(s(v_2)) = \mathfrak f(m_2) = m_1 = s(v_1)$; and thus, for any assignment $s \in X$ there exists some assignment $s' \in \texttt{Cl}(X) = X$ such that $s'(v_2) = s(v_1)$. This shows that $\M \models_X v_1 \subseteq v_2$, as required.
\end{proof}

Given the above lemma, the following consequence is immediate: 
\begin{propo}
	Let $\mathfrak G = (G, E)$ be a graph of the form $\mathfrak A_n$ or of the form $\mathfrak B_n$, and let $\phi$ be a $\FO(\subseteq_1)$ sentence over its signature. Then $\mathfrak G \models \phi$ if and only if $\mathfrak G \models \phi^f$.
	\label{propo:noinc}
\end{propo}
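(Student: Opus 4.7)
The plan is to derive this proposition as a direct consequence of Lemma \ref{lemma:clinc}, since that lemma has already done all the real work of showing that $\FO(\subseteq_1)$ collapses to $\FO$ on automorphism-closed teams over vertex-transitive models. What remains is just to check that the special case where the ``team'' is the one used to evaluate sentences fits the hypotheses of that lemma.

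First I would recall that, by the Team Semantics convention for sentences, $\mathfrak{G} \models \phi$ means $\mathfrak{G} \models_{\{\epsilon\}} \phi$ and similarly $\mathfrak{G} \models \phi^f$ means $\mathfrak{G} \models_{\{\epsilon\}} \phi^f$, where $\{\epsilon\}$ is the team containing only the empty assignment. So it suffices to apply Lemma \ref{lemma:clinc} to the team $X = \{\epsilon\}$.

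Next I would verify the two hypotheses of Lemma \ref{lemma:clinc} for this choice. The vertex-transitivity assumption (for any two points $m_1, m_2 \in G$ there is an automorphism of $\mathfrak{G}$ sending $m_1$ to $m_2$) holds for every $\mathfrak{A}_n$ and every $\mathfrak{B}_n$ by the preceding automorphism lemma, since rotations of the cycles (together, for $\mathfrak{A}_n$, with the swap of the two components) act transitively on the vertices. As for the closure condition $X = \texttt{Cl}(X)$, it holds trivially for $X = \{\epsilon\}$: every automorphism $\mathfrak{f} \colon G \to G$ sends the unique assignment with empty domain to itself, so $\texttt{Cl}(\{\epsilon\}) = \{\epsilon\}$.

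With both hypotheses in place, Lemma \ref{lemma:clinc} yields $\mathfrak{G} \models_{\{\epsilon\}} \phi \Leftrightarrow \mathfrak{G} \models_{\{\epsilon\}} \phi^f$, which is exactly $\mathfrak{G} \models \phi \Leftrightarrow \mathfrak{G} \models \phi^f$. There is really no obstacle at this stage; the entire difficulty was absorbed into proving Lemma \ref{lemma:clinc}, and in particular into the inclusion atom case there, which relies essentially on vertex-transitivity. The proposition itself is then an essentially immediate corollary, and the only point worth emphasising in the write-up is that the sentence-level team $\{\epsilon\}$ is automatically closed under automorphisms and so lies within the scope of the lemma.
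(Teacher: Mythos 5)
Your proof is correct and follows exactly the route the paper takes: both apply Lemma \ref{lemma:clinc} to the sentence-level team $\{\epsilon\}$, observing that it is trivially closed under automorphisms and that the automorphism lemma supplies the vertex-transitivity hypothesis for $\mathfrak A_n$ and $\mathfrak B_n$. Nothing is missing; your write-up is just slightly more explicit than the paper's about checking the two hypotheses.
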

\begin{proof}
	By definition, $\mathfrak G \models \phi$ if and only if $\mathfrak G \models_{\{\epsilon\}} \phi$, where $\epsilon$ is the unique empty assignment. But $\{\epsilon\}$ is closed by automorphisms, and therefore $\mathfrak G \models \phi$ if and only if $\mathfrak G \models \phi^f$. 
\end{proof}

However, it can be shown via a standard back-and-forth argument that 
	there is no first order sentence $\phi^f$ that is true in all models of the form $\mathfrak A_n$ and is false in all models of the form $\mathfrak B_n$. 
As a direct consequence of this, of Proposition \ref{propo:nonconn} and of Proposition \ref{propo:noinc} we then have that there exist $\FO(=\!\!(\cdot), \subseteq_1)$ sentences that are not equivalent to any $\FO(\subseteq_1)$ sentence, that is that
\begin{theo}
	Constancy atoms $=\!\!(\cdot)$ are not safe for $\FO(\subseteq_1)$. 
\end{theo}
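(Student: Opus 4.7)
The plan is to derive a contradiction from the assumption that constancy atoms are safe for $\FO(\subseteq_1)$. By Proposition~\ref{propo:nonconn}, the sentence
\[
  \Phi := \exists x y\,(=\!\!(y) \wedge \forall z\,(E x z \hookrightarrow z \subseteq x) \wedge x \not = y)
\]
of $\FO(=\!\!(\cdot), \subseteq_1)$ defines graph non-connectedness, so every $\mathfrak A_n$ (two disjoint cycles) satisfies $\Phi$ while every $\mathfrak B_n$ (single cycle) does not. If $=\!\!(\cdot)$ were safe for $\FO(\subseteq_1)$, then $\Phi$ would be equivalent to some $\phi \in \FO(\subseteq_1)$ enjoying the same separating property on the $\mathfrak A_n$ and $\mathfrak B_n$.

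Next I would invoke Proposition~\ref{propo:noinc} to collapse $\phi$ to its first order flattening $\phi^f$ on every $\mathfrak G \in \{\mathfrak A_n, \mathfrak B_n : n \in \mathbb N\}$, relying on the fact (established in the preceding lemma on automorphisms) that each such $\mathfrak G$ is vertex-transitive. This reduces the task to showing that no first order sentence can be true in every $\mathfrak A_n$ and false in every $\mathfrak B_n$, which is the ``standard back-and-forth'' claim asserted just before the theorem.

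For that first order impossibility I would run a $k$-round Ehrenfeucht--Fra\"iss\'e game, where $k$ is the quantifier rank of $\phi^f$ and $n$ is chosen so that the cycle lengths $2^{n+1}$ and $2^{n+2}$ far exceed $3^{k}$. Duplicator's strategy maintains the invariant that after round $i$, two pebbles on the same cycle are at the same cyclic distance whenever that distance is below the threshold $3^{k-i}$, and pebbles on different cycles of $\mathfrak A_n$ are mirrored by pebbles on $\mathfrak B_n$ that are more than $3^{k-i}$ apart along the single large cycle (and symmetrically). Because the cycle lengths dwarf the thresholds, Spoiler's move can always be answered while preserving the invariant, and at round $k$ the pebbled substructures form a partial isomorphism.

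The main obstacle is precisely this EF bookkeeping: one has to verify that the shrinking threshold can be maintained round by round as pebbles cluster and scatter arbitrarily on the cycles, and that ``different-component'' matching in $\mathfrak A_n$ is consistently paired with ``far apart on the single cycle'' matching in $\mathfrak B_n$. Once the EF lemma is in hand, the theorem follows immediately by chaining Propositions~\ref{propo:nonconn} and~\ref{propo:noinc} with the assumed equivalence $\Phi \equiv \phi$, contradicting that $\phi^f$ separates $\{\mathfrak A_n\}$ from $\{\mathfrak B_n\}$.
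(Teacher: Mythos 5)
Your proposal is correct and follows essentially the same route as the paper: assume safety, use Proposition \ref{propo:nonconn} to get a separating sentence, collapse it to its flattening via Proposition \ref{propo:noinc} (which rests on the automorphism lemma), and conclude from the first order inseparability of the $\mathfrak A_n$ and $\mathfrak B_n$. The only difference is that you sketch the Ehrenfeucht--Fra\"iss\'e argument explicitly, whereas the paper leaves it as a ``standard back-and-forth argument''; your sketch (fixing $k$, choosing $n$ with cycle lengths dwarfing the distance thresholds, and matching distant pebbles across components with distant pebbles on the single long cycle) is the standard and correct way to fill that in.
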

\section{Conclusions}
In this work, the concept of safe dependencies has been introduced. This notion generalizes the previously considered notion of strongly first order dependencies, and -- aside from being of independent interest -- it is a useful tool for the study of the expressivity (over sentences) of logics based on Team Semantics: indeed, being able to fully characterize the dependencies which are safe for a given logic is the same as fully characterizing the ways in which the language of this logic can be expanded (via dependency atoms) without increasing its overall expressive power. 

A natural point from which to begin the exploration of this notion was to examine the relationship between this notion and the notion of strongly first order dependency itself; and, as we saw, the obvious conjecture according to which a strongly first order dependency must be safe for all families of dependencies does not hold. This shows that the notion of safety is a delicate one -- one that, in particular, is not preserved when additional dependencies are added to the language.\footnote{Recall that a dependency is strongly first order if and only if it is safe for $\FO(\emptyset)$; therefore, in Section 5. we proved that $=\!\!(\cdot)$ is safe for $\FO(\emptyset)$ but not for $\FO(\subseteq_1)$.} The problem of characterizing safe dependencies and closed dependency families is almost entirely open, and steps towards its solution would do much to clarify the properties of logics based on Team Semantics.

We focused exclusively on logics obtained by adding new dependency atoms to the language of First Order Logic (interpreted via Team Semantics). The problems considered here, however, could also be studied as part of a more general theory of \emph{operators} in Team Semantics, for a sufficiently powerful notion of ``operator'' (possibly based on generalized quantifiers and/or on ideas from Transition Semantics \cite{galliani2014transition}). In this wider context, it seems likely that the questions and open conjectures discussed here would be of even harder solution; but on the other hand, it is possible that the study of the expressive power of families of operators (as opposed to dependencies) in Team Semantics would provide useful insights also towards the solution of the questions discussed in this work.\\

\paragraph{Acknowledgments}
The author thanks the anonymous reviewers. Furthermore, he thanks Fausto Barbero for a number of highly useful suggestions and comments. 
\bibliographystyle{eptcs}
\bibliography{biblio}
\end{document}